\numberwithin{equation}{section}
\theoremstyle{plain}
\newtheorem{definition}{Definition}[section]
\newtheorem{proposition}[definition]{Proposition}
\newtheorem{lemma}[definition]{Lemma}
\newtheorem{remark}[definition]{Remark}
\begin{document}
\title{Robust and non asymptotic estimation of probability weighted moments with application to extreme value analysis}
\author{Anna B\textsc{en}-H\textsc{amou}$^1$, Philippe N\textsc{aveau}$^{2}$ and  Maud T\textsc{homas}$^1$
 }

\date{}

\maketitle

\begin{center} 
{\small$^1$ Sorbonne Universit\'e, CNRS, Laboratoire de Probabilit\'es, Statistique et Mod\'elisation, \\LPSM, 4 place Jussieu, F-75005 Paris, France,\\
$^2$ Laboratoire des Sciences du Climat et de l'Environnement, UMR8212 CEA-CNRS-UVSQ, IPSL \& Universit\'e Paris-Saclay, 91191 Gif-sur-Yvettes, France\\
E-mails: anna.ben\_hamou@sorbonne-universite.fr, \\
philippe.naveau@lsce.ipsl.fr, \\maud.thomas@sorbonne-universite.fr}
\end{center}

\begin{abstract}
In extreme value theory and other related risk analysis fields, probability weighted moments (PWM) have been frequently used to estimate the parameters of classical  extreme value distributions.
This method-of-moment technique can be applied  when second moments are finite, a reasonable assumption in many environmental domains like climatological and hydrological studies. Three advantages of PWM estimators can be put forward:  their simple interpretations, their rapid numerical implementation and their close connection to the well-studied class of  $U$-statistics. Concerning the later, this connection leads to precise asymptotic properties, but non asymptotic bounds have been lacking when off-the-shelf techniques (Chernoff method) cannot be applied, as  exponential moment assumptions  become unrealistic in many extreme value settings. In addition, large values analysis    is not immune to the undesirable  effect of outliers, for example,  defective readings in satellite measurements or possible anomalies in climate model runs.
Recently, the treatment of outliers  has sparked some interest in extreme value theory, but results about finite sample bounds in a robust extreme value theory  context are yet to be found, in particular for  PWMs or  tail index estimators. 
In this work, we propose a new class of  robust PWM estimators, inspired by the median-of-means framework of~\citet{devroye2016sub}. This class of robust estimators is  shown to satisfy a sub-Gaussian inequality when the assumption  of  finite second moments holds. 
Such non asymptotic bounds are also derived under the general contamination model. 
Our  main proposition confirms theoretically the  trade-off between efficiency and robustness pointed out by \citet{Brazauskas00}.
Our simulation study indicates that, while classical estimators of PWMs can be highly sensitive to outliers,  our new approach remains weakly affected by the degree contamination.
\end{abstract}

\noindent{\bf Key words:} Probability weighted moments; Concentration inequalities; Robustness; Extreme Value Analysis.

\section{Introduction}

Let $X$ be an integrable real-valued random variable with cumulative distribution function  $F$. The probability weighted moments (PWMs) of $X$ are defined as
\[
E\left(XF(X)^{r}\overline{F}(X)^{s}\right) 
\]
where $r$ and $s$ are non-negative integers, and $\overline F = 1-F$ denotes the survival function associated with $F$. 
The use of these moments have been motivated by hydrologists and applied statisticians \citep[see, e.g.][]{Hosk:Wall:87,Land:Mata:Wall:79, greenwood1979probability}.They also appear naturally in the expression of the parameters of several distributions used in extreme value theory~\citep[see, e.g.][]{dehaanferreira2006}.
For example, if $F$ corresponds to a generalized extreme value distribution with shape parameter  $\xi \in \mathbb R$, then 
\begin{equation*}\label{eq: generalized extreme value}
    \frac{3E (X F^2(X)  - E(X)}{2E (X F(X) ) - E(X)} = \frac{3^\xi-1}{2^\xi-1} ,
\end{equation*}
and a similar formula is available for the generalized Pareto distribution. 

Such moment equalities provide simple building blocks to quickly and efficiently  implement a method-of-moment to estimate both generalized extreme value distribution or generalized Pareto parameters. Two main approaches have been used to infer PWMs. The first one consists in replacing the function $F$ by its empirical version and taking the mean over the sample. The second one takes advantage of the link between PWMs and order statistics. More precisely, if $(X_1,\dots,X_m)$ is an independent and identically distributed (i.i.d.) sample with common distribution function $F$, and if $X_{(1:m)}\leq\dots \leq X_{(m:m)}$ is the ordered  sample, then a simple calculation shows that, for all $1\leq k\leq m$, 
\[
\theta_{k:m} = E(X_{(k:m)})= k{m\choose k}E\left(XF(X)^{k-1}\overline{F}(X)^{m-k}\right)\, .
\]
This indicates that, in the i.i.d. setting, the estimation of PWMs can be deduced from the order statistics. A natural choice for estimating $\theta_{k:m}$, with $1\leq k\leq m\leq n$, is thus to use the unbiased estimator
\begin{equation}\label{eq:est-alpha_s}
  \frac{1}{{n\choose m}}\sum_{1\leq i_1<\dots<i_{m}\leq n} \Psi_k(X_{i_1},\dots,X_{i_{m}})
\end{equation}
where $\Psi_k(X_{i_1},\dots,X_{i_{m}})$ corresponds to the $k$-th order statistic in the sub-sample $(X_{i_1},\dots,X_{i_{m}})$. 
For instance, \citet{Land:Mata:Wall:79} considered the special case of $k=1$. Those two approaches are closely related and their asymptotic properties have  been studied in detail \citep[see, e.g.][]{Hosk:Wall:Wood:85,Ferreira_2015,Dielbolt08,Guillou09, Diebolt03, Diebolt04, diebolt07}. 

The literature on non asymptotic properties of PWM estimators is, to our knowledge, sparse. 
\citet{Furrer07} derived explicit variance expressions for finite samples, but only in the case where the sample distribution is a generalized Pareto distribution. Estimators such as~\eqref{eq:est-alpha_s} have at least two drawbacks. First, in heavy-tailed scenarios where the underlying distribution has only low-order moments, estimate properties are not established for finite samples. In particular, classical concentration inequalities based on exponential decay of the tail cannot be directly applied to quantities like~\eqref{eq:est-alpha_s}. Second, they may be extremely sensitive to the presence of outliers in the sample. The main motivation of this work is thus to design estimators of $\theta_{k:m}$ with good concentration properties under a second moment assumption only, and that would be robust to the presence of outliers.

Let us mention that the treatment of outliers for PWM estimation has rarely been covered within the extreme value theory community~\citep[see, e.g.][]{Hubert08,Dupuis06}. 
Reducing  the  negative impact of  
outliers, i.e. large corrupted anomalies, on the estimation of   extreme value parameters 
demands a careful statistical analysis. Therefore, inference tools based on robust statistics \citep[see e.g.][]{Minsker20,Hubert08,lecue2019learning,devroye2016sub} need to be adapted to extreme value theory. For example,   
 \citet{Brazauskas00} leveraged the concept of  generalized quantiles 
 to obtain favourable trade-offs between efficiency  and robustness in the estimation of  the parameters of a generalized Pareto distribution. 
Recently, \citet{Bhattacharya19}  studied a  trimmed version of the Hill estimator to infer positive $\xi$ and they 
proposed a methodology to identify extreme outliers in heavy-tailed data. 
 \citet{bhattacharya2019outlier} extended their work to light tail distributions and built  a tail-adjusted boxplot. 
Still, all these studies focused on developing asymptotic distributions for their estimators, but non asymptotic bounds were not obtained. 

To derive concentration bounds without exponential moment assumption and to achieve robustness, we propose, in Section~\ref{sec:median-of-means}, to adapt the so-called median-of-means  concentration technique \citep[see e.g.][]{devroye2016sub,joly2016robust,lecue2019learning,lecue2020robustAoS} to the estimation of 
$\theta_{k:m}$. Our estimator is actually defined in the much more general context of estimating the mean of symmetric multivariate kernels, when usual $U$-statistics may not give reliable estimates. In Section~\ref{sec:main}, we establish non asymptotic performance bounds, for degenerate and non-degenerate kernels, with sharp variance proxys.  In addition, we show that our estimator is strongly robust to the presence of outliers in the sample, under a very generic contamination scheme introduced by~\citet{lecue2019learning}. Section~\ref{sec:bounds:xi} combines the problem of tail index estimation with our robust median-of-means inference scheme. In Section~\ref{sec:simulations}, numerical experiments are used to compare the classical PWM approach with our method. All proofs can be found in Section~\ref{sec:proofs}. In the appendix,  the bounds derived in Section~\ref{sec:main} are generalized beyond the i.i.d. setting by considering exchangeable sequences satisfying a negative dependence condition know as conditional negative association.

\section{Median-of-means estimators}
\label{sec:median-of-means}

In this section, we recall the  median-of-means techniques and the construction of the associated estimators. As its name suggests, a  median-of-means estimator is obtained as the median of means, the latter being, in this work, computed as $U$-statistics on independent blocks of the original given sample.

In the sequel, we assume that samples are all independent and identically distributed, unless otherwise specified. Let $X_1,\dots,X_n$ be a sample with values in some measurable set $\mathcal{X}$.  We are interested in the  robust estimation of quantities of the form 
\begin{equation}\label{def:theta}
\theta = E\left(\Psi(X_1,\dots,X_m)\right)\in\mathbb{R} 
\end{equation}
where, for an integer $m\geq 1$, $\Psi:\mathcal{X}^m\to \mathbb{R}$ is a symmetric function, called kernel. Let $v_k=\mathrm{var}\left(E\left[\Psi(X_1,\dots,X_m)\,|\,X_1,\dots,X_k\right]\right)$, the kernel $\Psi$ is said to be $q$-degenerate, for $q\in\{1,\dots,m\}$, if $v_1=\dots=v_{q-1}=0$ and $v_q>0$. If $v_1>0$,  $\Psi$ is said to be non-degenerate.

Assuming $n\geq m$, a natural estimator for $\theta$ is given by the following $U$-statistics
\[
U_n=\frac{1}{{n\choose m}}\sum_{\substack{A\subset [n]\\ |A|=m}} \Psi(X_A)\, ,
\]
where $[n]=\{1,\dots,n\}$ and $X_A=(X_i)_{i\in A}$. In the case of PWM, $\theta = \theta_{k:m} = E(X_{(k:m)})$ and $\Psi = \Psi_k$ corresponds to the $k$-th order statistics in a sample of size $m$.

As shown by \citet{joly2016robust}, robust estimation of the parameter $\theta$, defined in \eqref{def:theta}, can be obtained using median-of-means techniques. We present here a new estimator of $\theta$ based on these techniques. The main idea is to divide the sample $(X_1, \dots, X_n)$ into $K$ blocks, that is disjoint subsets.
To fix this number $K$ of blocks, the practitioner first needs to choose an error level $\delta \in  [\mathrm{e}^{-\lfloor n/m \rfloor},1[$ and then the integer   $K$  can be defined as  \(K=\left\lceil \log(1/\delta)\right\rceil\). 
By construction,   $1\leq K\leq \lfloor n/m \rfloor$ and the set  $[n]=\{1,\dots,n\}$ is then divided into $K$ disjoint blocks $B_1,\dots,B_K$, each of size $|B_j| \geq \left\lfloor n/K\right\rfloor\geq m$.

Within each block  $j$, we   construct the U-statistic estimator 
\[
\hat{\theta}^{(j)}=\frac{1}{{|B_j|\choose m}}\sum_{\substack{A\subset B_j\\ |A|=m}} \Psi(X_A)\, ,
\]
and then compute the median  among blocks, i.e. 
\begin{equation}\label{est:medianofmeans}
\hat{\theta}_n=\mathrm{median}\left( \hat{\theta}^{(1)},\dots, \hat{\theta}^{(K)}\right)\, ,
\end{equation}
where $\mathrm{median}(z_1,\dots,z_K)$ corresponds to the smallest value $z\in\{z_1,\dots,z_K\}$ such that
\[
\left|\left\{j\in [K]\,,\, z_j\leq z\right\}\right|\geq K/2\quad\text{ and }\quad \left|\left\{j\in [K]\,,\, z_j\geq z\right\}\right|\geq K/2\, .
\]

Obtaining non asymptotic concentration inequalities for $U_n$ with the correct variance factor turns out to be a difficult problem, even in the non-degenerate case. Hoeffding or Bernstein-type inequalities have been obtained, but they require the kernel $\Psi$ to be bounded or to have sufficiently light tails. For instance, if $|\Psi|\leq c$,  \citet{maurer2019bernstein} that, for all $t\geq 0$,
\[
\mathrm{pr}\left(|U_n-E(U_n)|\geq t\right)\leq 2\exp\left(-\frac{n t^2}{2m^2v_1+\frac{4m^4v_m}{n}+\frac{16m^2ct}{3}}\right)\, 
\] 
\citep[see also][]{arcones1995bernstein}. Previously, \citet{hoeffding1963} had shown a sub-Gaussian inequality for $U_n$. 

However, when $\Psi$ is heavy-tailed, exponential inequalities might not hold anymore. Since, PWM estimators are classically used to construct estimators in extreme value analysis. We are interested in deriving non asymptotic bounds with minimum moments conditions. In the next section, we show that our median-of-means estimator exhibits exponential concentration with the correct variance factor, see Proposition~\ref{prop:medians-degenerate}. We show also that this estimator is robust to the presence of outliers in the sample, in a very generic contamination scheme, see Proposition~\ref{prop:medians-outliers}.

\section{Sub-Gaussian and Bernstein-type bounds for median-of-means estimators}\label{sec:main}

In this section, we derive sub-Gaussian and Bernstein-type bounds for general median-of-means estimators with non-degenerate or degenerate kernel. Then, the issue of the robustness to the presence of outliers in the sample is also addressed. The results are stated in a general context and PWM estimators simply correspond to a particular case. 

\begin{proposition}\label{prop:medians-degenerate}
Let $X_1, \dots, X_n$ be an i.i.d. sample with values in $\mathcal{X}$, and, for a positive integer $m\in [n]$, let $\Psi:\mathcal{X}^m\to \mathbb{R}$ be a symmetric $q$-degenerate kernel, $q\in [m]$, with $E\left[\Psi(X_1,\dots,X_m)\right]=\theta$ and $v_m<\infty$. Then, for all $\delta \in [\mathrm{e}^{-\lfloor n/m \rfloor} ,1)$, 
the median-of means estimator $\hat{\theta}_n$ defined by \eqref{est:medianofmeans} with $K=\left\lceil \log(1/\delta)\right\rceil$ satisfies 
\begin{equation}\label{eq:prop-median-sub-gaussian-degenerate}
\mathrm{pr}\left(  | \hat{\theta}_n -\theta | > 2\mathrm{e}\sqrt{\frac{{m-1\choose q-1}(2m)^qv_m \left\lceil \log(1/\delta)\right\rceil^q}{n^q}}\right)\leq \delta\, ,
\end{equation}
and
\begin{equation}\label{eq:prop-median-sub-gamma-degenerate}
\mathrm{pr}\left(  | \hat{\theta}_n -\theta | > 2\mathrm{e}\sqrt{\frac{{m\choose q}(2m)^q v_q \left\lceil \log(1/\delta)\right\rceil^q}{n^q}+\frac{{m-1\choose q}(2m)^{q+1}v_m \left\lceil \log(1/\delta)\right\rceil^{q+1}}{n^{q+1}}}\right)\leq \delta\, .
\end{equation}
\end{proposition}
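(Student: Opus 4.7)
The plan is to follow the classical median-of-means template, specialised to $U$-statistics: first control the deviation of each block estimator $\hat\theta^{(j)}$ using a variance-based (Chebyshev) inequality, and then promote the resulting per-block tail estimate into an exponential concentration for $\hat\theta_n$ by aggregating across the $K$ disjoint blocks.

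For the per-block step I would begin from Hoeffding's variance decomposition for a $U$-statistic of degree $m$ on a block of size $N_j := |B_j| \geq \lfloor n/K\rfloor$:
\[
\mathrm{Var}\bigl(\hat\theta^{(j)}\bigr)\;=\; \binom{N_j}{m}^{-1}\sum_{k=q}^{m}\binom{m}{k}\binom{N_j-m}{m-k}\, v_k,
\]
the sum starting at $k=q$ by $q$-degeneracy. Two ingredients then feed into the variance proxy. The law of total variance gives the monotonicity $v_1 \leq \dots \leq v_m$; combined with the tighter estimate $\sigma_c^2 \leq v_m/\binom{m}{c}$ coming from the identity $v_m = \sum_{j=1}^m \binom{m}{j}\sigma_j^2$ for the canonical H-decomposition variances, this lets me replace each $v_k$ by $v_m$ in a controlled way. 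Bounding the hypergeometric weights $\binom{m}{k}\binom{N_j-m}{m-k}/\binom{N_j}{m}$ by quantities of order $\binom{m-1}{k-1}(2m/N_j)^k$ (which follows from the elementary estimate $\binom{m}{k}/\binom{N_j}{k} \leq \binom{m-1}{k-1}(2m/N_j)^k$ once $N_j$ is at least of order $m$) and singling out the leading $k=q$ term then yields, for the sub-Gaussian bound \eqref{eq:prop-median-sub-gaussian-degenerate}, the variance proxy $V := \binom{m-1}{q-1}(2m)^q v_m/N_j^q$. For \eqref{eq:prop-median-sub-gamma-degenerate} I would retain the leading term with its true coefficient $v_q$ and bound only the $k \geq q+1$ contributions by $v_m$, producing the two-term Bernstein-type form with denominators $N_j^q$ and $N_j^{q+1}$. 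Substituting $N_j \geq n/K$ translates these block-level bounds into the expressions displayed in the statement.

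Once $\mathrm{Var}(\hat\theta^{(j)}) \leq V$ is in hand, Chebyshev's inequality gives $\mathrm{pr}(|\hat\theta^{(j)} - \theta| > 2\mathrm{e}\sqrt{V}) \leq 1/(4\mathrm{e}^2)$. Because the $K$ blocks are disjoint and the sample is i.i.d., the block estimators are independent, so the indicators $Z_j := \mathbf{1}\{|\hat\theta^{(j)} - \theta| > 2\mathrm{e}\sqrt{V}\}$ are i.i.d.\ Bernoulli with parameter $p \leq 1/(4\mathrm{e}^2)$. The median deviates by more than $2\mathrm{e}\sqrt{V}$ only if at least half of the block estimators do, hence
\[
\mathrm{pr}\bigl(|\hat\theta_n - \theta| > 2\mathrm{e}\sqrt{V}\bigr)\;\leq\; \mathrm{pr}\!\left(\sum_{j=1}^K Z_j \geq K/2\right)\;\leq\; \exp\bigl(-K\cdot D(1/2\,\|\,1/(4\mathrm{e}^2))\bigr)\;\leq\; \mathrm{e}^{-K}\;\leq\; \delta,
\]
the second step being Chernoff's bound and the third following from the elementary computation $D(1/2\,\|\,1/(4\mathrm{e}^2)) = 1 + 1/(8\mathrm{e}^2) > 1$ for the binary relative entropy; the last step is just $K = \lceil\log(1/\delta)\rceil$.

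The main technical obstacle is the variance step: to recover the precise combinatorial prefactors $\binom{m-1}{q-1}$ and $\binom{m-1}{q}$ rather than looser surrogates such as $\binom{m}{q}^2 q!$, one has to handle the hypergeometric weights carefully, and the argument also has to cover the regime where $N_j$ is only moderately larger than $m$ (the proposition allows $K$ as large as $\lfloor n/m\rfloor$). Once this bookkeeping is settled the median aggregation is mechanical and follows the scheme of \citet{devroye2016sub}.
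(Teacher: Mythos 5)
Your overall route is the same as the paper's: a variance bound for each block $U$-statistic, Chebyshev to get a per-block failure probability $p\le 1/(4\mathrm{e}^2)$ at threshold $2\mathrm{e}\sqrt{V}$, and a Chernoff/binomial tail for the number of deviating blocks, with $a=1/2$ giving $(2\sqrt{p})^K=\mathrm{e}^{-K}\le\delta$. That aggregation step is correct as written (your ``$=$'' for the relative entropy should be ``$\ge$'', but only $D>1$ is needed), and your H-decomposition identity does deliver the inequality $v_k\le (k/m)v_m$ that the paper imports from Hoeffding.

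The gap is in the variance lemma, exactly where you flag ``the main technical obstacle'', and as sketched it does not close. Bounding the hypergeometric weights term by term by $\binom{m-1}{k-1}(2m/N_j)^k$ and then summing over $k\ge q$ cannot yield the prefactor $\binom{m-1}{q-1}$ in the full range the proposition covers: the statement allows $K$ up to $\lfloor n/m\rfloor$, in which case $N_j$ is of order $m$, the ratio $2m/N_j$ is of order $1$, and $\sum_{k\ge q}\binom{m-1}{k-1}(2m/N_j)^k$ is not within a constant of its leading term. The paper avoids any term-by-term estimate: after using $v_k\le(k/m)v_m$ it absorbs $k/m$ via $\binom{m}{k}\tfrac{k}{m}=\binom{m-1}{k-1}$, shifts the index, and bounds the \emph{whole} sum $\sum_{k\ge q-1}\binom{m-1}{k}\binom{n-m}{m-1-k}$ at once by $\binom{m-1}{q-1}\binom{n-q}{m-q}$ through a counting argument (selections of $m-1$ elements of an $(n-1)$-set containing at least $q-1$ marked elements are overcounted by first picking $q-1$ marked ones and then $m-q$ of the remainder), and only then uses $\binom{n-q}{m-q}/\binom{n}{m}\le(m/n)^q$. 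A second, smaller bookkeeping issue: the factor $2^q$ in the statement comes from $|B_j|\ge\lfloor n/K\rfloor\ge n/(2K)$, so you cannot both spend a factor $2$ inside the combinatorial weight bound and afterwards substitute $N_j\ge n/K$, which need not hold when $K\nmid n$; only one factor of $2$ per power of $N_j$ is available in total.
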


Bound~\eqref{eq:prop-median-sub-gaussian-degenerate} is  similar to the one obtained by~\citet{joly2016robust} for their estimator. Bound~\eqref{eq:prop-median-sub-gamma-degenerate} is new and  gives a significant improvement over~\eqref{eq:prop-median-sub-gaussian-degenerate}, especially in the regime where $\log(1/\delta)/n=o(1)$. In that case, the second term under the square root can be neglected, and the variance factor in the first term is improved to
\[
\frac{{m\choose q} (2m)^qv_q}{n^q} \, , 
\]
which is close to the asymptotic variance obtained by \citet{hoeffding1948} which states that
\begin{equation*}\label{eq:asymp-variance}
\mathrm{var}\left(U_n\right) \underset{n\to\infty}{\sim} q!{m\choose q}^2 \frac{v_q}{n^q}\, ,
\end{equation*}
for non-degenerate kernels, $\mathrm{var}(U_n)\underset{n\to\infty}{\sim}m^2v_1/n$, and $U_n$ is asymptotically normal:
\[
\sqrt{n}(U_n-\theta)\underset{n\to\infty}{\rightsquigarrow} \mathcal{N}(0,m^2v_1)\, .
\]
In the case $q=1$, i.e.\ in the non-degenerate case, then Equation~\eqref{eq:prop-median-sub-gaussian-degenerate} states that $\hat{\theta}_n -\theta$ is sub-Gaussian on both tails with variance factor proportional to $mv_m/n$:
\begin{equation}\label{eq:prop-median-sub-gaussian}
\mathrm{pr}\left(  | \hat{\theta}_n -\theta | > 2\mathrm{e}\sqrt{\frac{2mv_m \left\lceil \log(1/\delta)\right\rceil}{n}}\right)\leq \delta\, ,
\end{equation}
while Equation~\eqref{eq:prop-median-sub-gamma-degenerate} states that it is sub-gamma on both tails with variance factor proportional to $m^2v_1/n$ and scale factor proportional to $\sqrt{m^3v_m}/n$:
\begin{equation}\label{eq:prop-median-sub-gamma}
\mathrm{pr}\left(  | \hat{\theta}_n -\theta | > 2\mathrm{e}\sqrt{\frac{2m^2v_1 \left\lceil \log(1/\delta)\right\rceil}{n}+\frac{4m^3v_m \left\lceil \log(1/\delta)\right\rceil^2}{n^2}}\right)\leq \delta\, .
\end{equation}
Since $mv_1\leq v_m$, the variance factor in~\eqref{eq:prop-median-sub-gamma} is always smaller than the one in~\eqref{eq:prop-median-sub-gaussian}. Due to the scale factor in~\eqref{eq:prop-median-sub-gamma}, either inequality might be better, depending on the value of $\delta$. 

\begin{remark}\label{prop:medians-independent}
The estimator $\hat{\theta}_n$  depends on the pre-chosen  confidence threshold $\delta$.  As shown by~\citet[Theorem 4.2]{devroye2016sub}, when an upper-bound $v^\star_{m}$ on $v_{m}$ is available, an estimator $\tilde{\theta}_n$ independent of $\delta$ may be constructed.
The notation $\hat{\theta}_{n,\delta}$ below highlights this connection.  From Proposition~\ref{prop:medians-degenerate}, for all $k\in \{1,\dots,\lfloor n/m\rfloor\}$, the interval
\[
\hat{I}_k=\left[ \hat{\theta}_{n,\mathrm{e}^{-k}} \pm 2\mathrm{e}\sqrt{\frac{2mv^\star_{m}k}{n}}\right]
\]
is a confidence interval with level $1-\mathrm{e}^{-k}$, where $\hat{\theta}_{n,\mathrm{e}^{-k}}$ denote the estimator~\eqref{est:medianofmeans} obtained with $\delta =\mathrm{e}^{-k} $. Now, let
\[
\hat{k}=\min\left\{ 1\leq k\leq \lfloor n/m\rfloor\, , \bigcap_{j=k}^{\lfloor n/m\rfloor} \hat{I}_j\neq \emptyset\right\}\, ,
\]
and define the estimator $\tilde{\theta}_n$ as the midpoint of the interval $\bigcap_{j=\hat{k}}^{\lfloor n/m\rfloor}\hat{I}_j$. Then, the estimator $\tilde{\theta}_n$ satisfies, for all $\delta\in \left[ \mathrm{e}^{-\lfloor n/m\rfloor}/(1-\mathrm{e}^{-1}),1\right[$,
\begin{equation}\label{eq:medians-independent}
\mathrm{pr}\left( | \tilde{\theta}_n -\theta |>  4\mathrm{e}\sqrt{\frac{2mv^\star_{m}\left(1+\log\left(\tfrac{1}{1-\mathrm{e}^{-1}}\right)+\log\left(\frac{1}{\delta}\right)\right)}{n}}\right)\leq \delta\, .
\end{equation}

\end{remark}

Recall that our main goal was two-fold. First, we aim at proposing a family of estimators of $\theta$ for which sharp concentration bounds are available under minimal moment conditions. Then, we also wish to address the issue of robustness of these estimators. This is the purpose of the next result which concerns the robustness of the estimator $\hat{\theta}_n$ to the presence of outliers in the original sample. To translate the presence of outliers, we consider the contamination scheme introduced by~\citet{lecue2019learning}: the index set $[n]$ is divided into two disjoint subsets, the subset $\mathcal{I}$ of inliers, and the subset $\mathcal{O}$ of outliers. The sequence $(X_i)_{i\in\mathcal{I}}$ is i.i.d. while no assumption is made on the variables $(X_i)_{i\in\mathcal{O}}$. In what follows, $\mathrm{pr}_{\mathcal{O}\cup\mathcal{I}}$ corresponds to the distribution of such a contaminated sample.

\begin{proposition}\label{prop:medians-outliers}
Let $(X_1, \dots, X_n)$ be a contaminated sample under the $\mathcal{O}\cup\mathcal{I}$ model. For all $\delta \in [\mathrm{e}^{-\lfloor n/m \rfloor} ,1)$, let $\hat{\theta}_n$ be defined as in~\eqref{est:medianofmeans} with $K=\left\lceil \log(1/\delta)\right\rceil$. If $|\mathcal{O}|\leq K/4$, then 
\[
\mathrm{pr}_{\mathcal{O}\cup\mathcal{I}}\left( | \hat{\theta}_n -\theta | > \frac{16\mathrm{e}^2}{3\sqrt{3}}\sqrt{\frac{2mv_m \left\lceil \log(1/\delta)\right\rceil}{n}}\right)\leq \delta\, ,
\]
and
\[
\mathrm{pr}_{\mathcal{O}\cup\mathcal{I}}\left( | \hat{\theta}_n -\theta | > \frac{16\mathrm{e}^2}{3\sqrt{3}}\sqrt{\frac{2m^2v_1 \left\lceil \log(1/\delta)\right\rceil}{n}+\frac{4m^3v_m \left\lceil \log(1/\delta)\right\rceil^2}{n^2}}\right)\leq \delta\, .
\]
\end{proposition}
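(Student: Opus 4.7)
The plan is to mimic the proof of Proposition~\ref{prop:medians-degenerate}, but to restrict the binomial tail argument to blocks untouched by outliers. Call a block $B_j$ \emph{clean} if $B_j\cap \mathcal{O}=\emptyset$, and let $\mathcal{C}\subset [K]$ be the set of clean-block indices. Because the blocks are pairwise disjoint and $|\mathcal{O}|\leq K/4$, we have $|\mathcal{C}|\geq 3K/4$, and for each $j\in\mathcal{C}$ the sub-sample $(X_i)_{i\in B_j}$ is i.i.d.\ with distribution $F$. Consequently $\hat{\theta}^{(j)}$ is an honest $U$-statistic of the symmetric kernel $\Psi$ and inherits the same variance estimates used in Proposition~\ref{prop:medians-degenerate}, namely the coarse bound $\mathrm{var}(\hat{\theta}^{(j)})\leq mv_m/|B_j|\leq 2mKv_m/n$ and the sharper Hoeffding decomposition $\mathrm{var}(\hat{\theta}^{(j)})\lesssim m^2 v_1/|B_j|+m^4 v_m/|B_j|^2$ driving the sub-gamma refinement.

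Next, the median argument is adapted to contamination. If $\hat{\theta}_n>\theta+t$, then by definition of the median at least $K/2$ block estimates satisfy $\hat{\theta}^{(j)}>\theta+t$; since at most $K/4$ of these can come from contaminated blocks, at least $K/4$ must come from $\mathcal{C}$. Writing $Z_j=\mathbf{1}\{\hat{\theta}^{(j)}>\theta+t\}$ for $j\in\mathcal{C}$, the variables $(Z_j)_{j\in\mathcal{C}}$ are independent Bernoulli (conditional on the outlier data) with mean bounded by $p_t\leq \mathrm{var}(\hat{\theta}^{(j)})/t^{2}$ via Chebyshev. Thus
\[
\mathrm{pr}_{\mathcal{O}\cup\mathcal{I}}(\hat{\theta}_n>\theta+t)\leq \mathrm{pr}\Bigl(\sum_{j\in\mathcal{C}}Z_j\geq K/4\Bigr)\leq \binom{|\mathcal{C}|}{K/4}p_t^{K/4}\leq (4\mathrm{e}\,p_t)^{K/4},
\]
using $\binom{N}{k}\leq (\mathrm{e} N/k)^k$ with $N\leq K$ and $k=K/4$. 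A symmetric argument bounds $\mathrm{pr}_{\mathcal{O}\cup\mathcal{I}}(\hat{\theta}_n<\theta-t)$. Setting the right-hand side at most $\delta/2$ with $K=\lceil \log(1/\delta)\rceil$ amounts to asking $p_t$ to be below an explicit numerical constant, and substituting the clean-block variance bound yields the stated threshold.

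Both inequalities of the proposition follow from this same skeleton: plugging in the coarse variance bound gives the sub-Gaussian inequality, while plugging in the refined Hoeffding decomposition and solving the resulting quadratic produces the sub-gamma form, exactly as for the uncontaminated case in Proposition~\ref{prop:medians-degenerate}. The main obstacle is tracking constants. In the clean setting one needs $\geq K/2$ bad blocks out of $K$, which permits a per-block deviation probability of order $1/(4\mathrm{e}^{2})$ and yields the prefactor $2\mathrm{e}$. In the contaminated setting one needs $\geq K/4$ bad \emph{clean} blocks out of $\geq 3K/4$, which shrinks the admissible $p_t$ and inflates the prefactor. Careful bookkeeping of the binomial coefficient estimate, the union over the two tails, and the rounding in $K=\lceil \log(1/\delta)\rceil$ is required to recover the announced value $16\mathrm{e}^{2}/(3\sqrt{3})$ exactly.
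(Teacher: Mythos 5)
Your skeleton coincides with the paper's: reduce to the clean blocks (the blocks are disjoint, so at most $|\mathcal{O}|\leq K/4$ of them meet $\mathcal{O}$, and a deviation of the median forces at least $K/2-K/4=K/4$ of the i.i.d.\ clean-block estimates to deviate), control each clean block by Chebyshev via Lemma~\ref{lemma:variance-block}, and finish with a binomial tail bound. The substantive divergence is that last step, and it is where your argument cannot deliver the stated constant. The paper applies Lemma~\ref{lemma:laplace} with $a=1/4$, which gives $\mathrm{pr}\left(\sum_j Y_j\geq K/4\right)\leq (4p)^{K/4}(4/3)^{3K/4}$; this equals $\mathrm{e}^{-K}$ precisely when $p=3^3 2^{-8}\mathrm{e}^{-4}$, and combined with the per-block bound $\mathrm{var}(\hat{\theta}^{(j)})/\min\{t_1,t_2\}^2\leq 1/(4\mathrm{e}^2)$ already established in the proof of Proposition~\ref{prop:medians-degenerate}, this forces the inflation factor $t/\min\{t_1,t_2\}=8\mathrm{e}/(3\sqrt{3})$ and hence the prefactor $16\mathrm{e}^2/(3\sqrt{3})$. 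Your subset union bound $\binom{|\mathcal{C}|}{K/4}p_t^{K/4}\leq(4\mathrm{e}\,p_t)^{K/4}$ replaces the factor $(4/3)^{3K/4}=\left((64/27)^{1/4}\right)^{K}$ by $\left(\mathrm{e}^{1/4}\right)^{K}$, and since $64/27<\mathrm{e}$ this is strictly weaker: it requires $p_t\leq \mathrm{e}^{-5}/4$ rather than $27\,\mathrm{e}^{-4}/256$, i.e.\ an inflation factor $\mathrm{e}^{3/2}\approx 4.48$ instead of $8\mathrm{e}/(3\sqrt{3})\approx 4.19$. No amount of bookkeeping recovers $16\mathrm{e}^2/(3\sqrt{3})$ along this route; what you actually prove is the proposition with prefactor $2\mathrm{e}^{5/2}$.

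A second, smaller defect: splitting into the two tails $\{\hat{\theta}_n>\theta+t\}$ and $\{\hat{\theta}_n<\theta-t\}$ with target $\delta/2$ each is both unnecessary and slightly broken, because $K=\lceil\log(1/\delta)\rceil$ only guarantees $\mathrm{e}^{-K}\leq\delta$, not $\mathrm{e}^{-K}\leq\delta/2$; forcing $\delta/2$ per tail would degrade the constant further. Do as in the proof of Proposition~\ref{prop:medians-degenerate}: set $Y_j=\mathbf{1}\{|\hat{\theta}^{(j)}-\theta|>t\}$ and use that $|\hat{\theta}_n-\theta|>t$ already implies $\sum_{j=1}^K Y_j\geq K/2$, since at least $K/2$ block estimates lie on the far side of the median. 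One application of the tail bound then yields failure probability $\delta$, not $2\delta$. Everything else in your write-up — disjointness giving $|\mathcal{C}|\geq 3K/4$, independence of the clean-block estimates, and the two variance bounds feeding the sub-Gaussian and Bernstein-type forms — matches the paper's proof.
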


To obtain non asymptotic bounds for $\theta_{k:m}$, it suffices to take $\Psi=\Psi_k$, that is the $k$-th order statistic in a sample of size $m$, which is symmetric and non-degenerate. This introduces our new class of robust estimators  $\hat{\theta}_{k:m}$ of $\theta_{k:m}=E(X_{(k:m)})$ as defined in~\eqref{est:medianofmeans} with $K=\left\lceil\log(1/\delta)\right\rceil$ for some $\delta\in [\mathrm{e}^{-\lfloor n/m\rfloor}, 1[ $. Thus, $\hat \theta_{k:m}$ satisfies sub-Gaussian and Bernstein-type inequalities for uncontaminated or contaminated samples, that is $\hat \theta_{k:m}$ satisfies Propositions~\ref{prop:medians-degenerate} and~\ref{prop:medians-outliers} with $v_m = \mathrm{var}(X_{(k:m)})$ and $v_1 = \mathrm{var}(E(X_{(k:m)}|X_1))$.

\section{Sub-Gaussian and Bernstein-type bounds for the median-of-means PWM estimator of the tail index}\label{sec:bounds:xi}

Considering a sample $X_1,\ldots, X_n$ distributed according to a generalized extreme value distribution  $G_{\xi}$. As mentioned in the introduction, the parameter $\xi$ can be linked to PWM. Here, we consider the following explicit expression proposed by \citet[][Remark 2.2]{Ferreira_2015}:
\begin{equation}\label{eq:xi:gev}
\xi = \frac{1}{\log 2}\log \left(\frac{4E(XG_\xi^3(X)) - 2 E(XG_\xi(X)) }{2E(XG_\xi(X)) -  E(X) }\right)=\frac{1}{\log 2}\log \left(\frac{\theta_{4} - \theta_{2} }{\theta_{2} -  \theta_{1} }\right)\, ,
\end{equation}
where $\theta_j=\theta_{j:j}=E(\max(X_1,\dots,X_j))$. Therefore, a natural estimator of $\xi$ is obtained by simply estimating the $\theta_j$, $j=1,2,4$ in~\eqref{eq:xi:gev}  and a median-of-means PWM estimator of $\xi$ is thus given by
\begin{equation}\label{eq:est:xi}
\hat \xi_n = \frac{1}{\log 2}\log \left(\frac{\hat\theta_{4} - \hat\theta_{2} }{\hat\theta_{2} - \hat\theta_{1} }\right) \, , 
\end{equation}
where $\hat{\theta}_j$ be the median-of-means estimate $\theta_{j}$ for $j\in\{1,2,4\}$ constructed with $K=\left\lceil \log(1/\delta)\right\rceil$ blocks for $\delta\in [\mathrm{e}^{-\lfloor n/4 \rfloor},1[$. The next result shows that $\xi_n$ is sub-Gaussian on both tails for uncontaminated or contaminated samples.

\begin{proposition}\label{prop:ineq-xi}
Let $X_1,\ldots, X_n$ be a sample with distribution function $F$. Then, 
 \[
\mathrm{pr} \left(|\hat{\xi}_n-\xi| \geq \frac{2\mathrm{e}(2^\xi +1)(2^{-\hat{\xi}}+2^{-\xi})}{\log(2)(\hat{\theta}_{2}-\hat{\theta}_{1})}\sqrt{\frac{8\max(v_{1},v_{2},v_{4})\left\lceil \log(1/\delta)\right\rceil}{n}} \right) \leq 3\delta\, ,
 \]
 where $v_j=\mathrm{var}\left(\max(X_1,\dots,X_j)\right)$.

 Moreover, if the sample $X_1,\ldots, X_n$ is contaminated by outliers with the same scheme of Proposition~\ref{prop:medians-outliers}, then 
 \[
 \mathrm{pr}_{\mathcal{O} \cup \mathcal{I}} \left(|\hat{\xi}_n-\xi| \geq \frac{16\mathrm{e}^2(2^\xi +1)(2^{-\hat{\xi}}+2^{-\xi})}{3\log(2) \sqrt 3(\hat{\theta}_{2}-\hat{\theta}_{1})}\sqrt{\frac{8\max(v_{1},v_{2},v_{4})\left\lceil \log(1/\delta)\right\rceil}{n}}\right) \leq 3\delta\, .
 \]
 \end{proposition}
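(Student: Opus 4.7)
\textbf{Plan for Proposition~\ref{prop:ineq-xi}.} The plan is to propagate the three individual errors $\epsilon_j := |\hat{\theta}_j - \theta_j|$ for $j \in \{1,2,4\}$ through the explicit formula defining $\hat{\xi}_n$ by a deterministic Lipschitz-type argument, and then to apply Proposition~\ref{prop:medians-degenerate} (respectively Proposition~\ref{prop:medians-outliers}) simultaneously to each $\epsilon_j$ via a union bound over three events. Introducing the shorthand $N := \theta_4 - \theta_2$, $\hat{N} := \hat{\theta}_4 - \hat{\theta}_2$, $D := \theta_2 - \theta_1$, $\hat{D} := \hat{\theta}_2 - \hat{\theta}_1$, the relations~\eqref{eq:xi:gev} and~\eqref{eq:est:xi} give $N/D = 2^\xi$ and $\hat{N}/\hat{D} = 2^{\hat{\xi}_n}$, so that $\hat{\xi}_n - \xi$ equals $(\log 2)^{-1}$ times the difference of the logarithms of these two positive ratios.

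The deterministic part proceeds in two steps. First, applying the mean value theorem to $x \mapsto \log_2 x$ on the segment between $\hat{N}/\hat{D} = 2^{\hat{\xi}_n}$ and $N/D = 2^\xi$ yields
\[
|\hat{\xi}_n - \xi| \le \frac{|\hat{N}/\hat{D} - N/D|}{\log 2 \cdot \min(2^\xi, 2^{\hat{\xi}_n})} \le \frac{(2^{-\xi} + 2^{-\hat{\xi}_n})}{\log 2}\left|\frac{\hat{N}}{\hat{D}} - \frac{N}{D}\right|,
\]
using $1/\min(2^\xi, 2^{\hat{\xi}_n}) \le 2^{-\xi} + 2^{-\hat{\xi}_n}$, which produces the factor $(2^{-\hat{\xi}} + 2^{-\xi})$ of the statement. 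Second, linearize the ratio difference via $\hat{N}/\hat{D} - N/D = (D(\hat{N} - N) - N(\hat{D} - D))/(D \hat{D})$ and the triangle bounds $|\hat{N} - N| \le \epsilon_4 + \epsilon_2$, $|\hat{D} - D| \le \epsilon_1 + \epsilon_2$, combined with $N/D = 2^\xi$, to obtain an inequality of the form
\[
\left|\frac{\hat{N}}{\hat{D}} - \frac{N}{D}\right| \le \frac{\epsilon_4 + (1+2^\xi)\epsilon_2 + 2^\xi \epsilon_1}{\hat{D}} \le \frac{c (2^\xi + 1) \max_{j\in\{1,2,4\}} \epsilon_j}{\hat{\theta}_2 - \hat{\theta}_1},
\]
for an absolute numerical constant $c$, which accounts for the factor $(2^\xi + 1)$ and for the random denominator $\hat{\theta}_2 - \hat{\theta}_1$ in the proposition.

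To conclude in the i.i.d.\ case, I apply the sub-Gaussian inequality~\eqref{eq:prop-median-sub-gaussian} of Proposition~\ref{prop:medians-degenerate} to each $\hat{\theta}_j$ (a non-degenerate symmetric kernel of order $m = j \in \{1,2,4\}$); since $2 j v_j \le 8 \max(v_1, v_2, v_4)$, each bound reads $\epsilon_j \le 2\mathrm{e}\sqrt{8 \max(v_1,v_2,v_4) \lceil \log(1/\delta)\rceil / n}$ with probability at least $1 - \delta$, and a union bound over the three indices gives the same control on $\max_j \epsilon_j$ with probability at least $1 - 3\delta$. Substituting into the deterministic inequality yields the first assertion. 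The contaminated case is argued identically, replacing Proposition~\ref{prop:medians-degenerate} by Proposition~\ref{prop:medians-outliers}, which simply swaps the numerical prefactor $2\mathrm{e}$ for $16\mathrm{e}^2/(3\sqrt{3})$ without affecting the rest of the argument. The main delicacy is the random denominator $\hat{\theta}_2 - \hat{\theta}_1$ in the final bound: it arises naturally from keeping $\hat{D}$ in the common denominator of the ratio decomposition, and the inequality is informative only on the event $\hat{\theta}_2 > \hat{\theta}_1$, which in turn is guaranteed with high probability provided $n$ is large enough with respect to the variance proxies, so that $\hat{\xi}_n$ itself is well-defined.
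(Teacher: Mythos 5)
Your proposal follows essentially the same route as the paper: a union bound over the three events $\{|\hat{\theta}_j-\theta_j|\le c_n(\delta)\}$, $j\in\{1,2,4\}$, each controlled by Proposition~\ref{prop:medians-degenerate} (or~\ref{prop:medians-outliers}) with $2jv_j\le 8\max(v_1,v_2,v_4)$, followed by a deterministic propagation of the errors through the ratio and the logarithm; the algebraic decomposition of $\hat{N}/\hat{D}-N/D$ you use is equivalent to the paper's and yields the same factor $2(2^\xi+1)\max_j\epsilon_j/\hat{D}$. The one quantitative shortfall is in the logarithm step: bounding $1/\min(2^\xi,2^{\hat{\xi}})$ by $2^{-\xi}+2^{-\hat{\xi}}$ via the mean value theorem loses a factor of $2$, so your argument as written establishes the inequality with constant $4\mathrm{e}$ (resp.\ $32\mathrm{e}^2/(3\sqrt{3})$) rather than the stated $2\mathrm{e}$ (resp.\ $16\mathrm{e}^2/(3\sqrt{3})$). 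The paper instead invokes $|\log a-\log b|\le \tfrac{1}{2}|a-b|\left(a^{-1}+b^{-1}\right)$ (the logarithmic mean dominates the harmonic mean), whose extra factor $\tfrac{1}{2}$ exactly cancels the $2$ coming from the ratio decomposition and recovers the stated constant. Everything else, including your remark that the bound is only informative when $\hat{\theta}_2>\hat{\theta}_1$, matches the paper's treatment.
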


In extreme value theory, statisticians are interested in the estimation of this parameter $\xi$ which reflects the heaviness of the tail of the distribution. Our median-of-mean estimator thus provide a robust and with good finite sample properties estimation of such quantity.

\section{Numerical experiments}\label{sec:simulations}

This section illustrates the robustness of our median-of-means PWM estimators $\hat \theta_{k:m}$ and $\hat \xi$. We consider the following setting. We simulate 1~000 generalized extreme value samples contaminated by outliers. For that purpose, we first draw samples of inliers of size $n_{\mathcal I}$ distributed according a generalized extreme value distribution with 3 different values for the tail index $\xi=$-0.4, 0 and 0.4. These samples are then contaminated by $n_{\mathcal O}$ outliers, such that $n=n_{\mathcal I}+n_{\mathcal O}$ constant equal to 200. To simulate the outliers, we distinguish two cases:  $\xi <0$ and $\xi \geq 0$. When $\xi <0$, the probability that a generalized extreme value variable with parameter $\xi<0$ exceeds any value $-1/\xi$ is equal to 0. The outliers $X_i$, $i=1,\ldots, n_{\mathcal O}$ are then obtained as 
\[
X_i \sim \text{Unif}[0,20-\frac{1}{\xi}]  , \quad i=1,\ldots,n_{\mathcal O}. 
\] When $\xi  \geq 0$, The outliers $X_i$, $i=1,\ldots, n_{\mathcal O}$ are obtained 
\[
X_i \sim \mathcal{N}(q_\xi(1-10^{-4}), 1)  , \quad i=1,\ldots,n_{\mathcal O}. 
\]
where  $q_\xi(1-10^{-4})$ denotes the $(1-10^{-4}$)-quantile of a generalized extreme value of parameter~$\xi$. 

\begin{figure}
    \centering
    \begin{tabular}{c}
    \begin{tabular}{cc}
    \includegraphics[width= 0.45\linewidth]{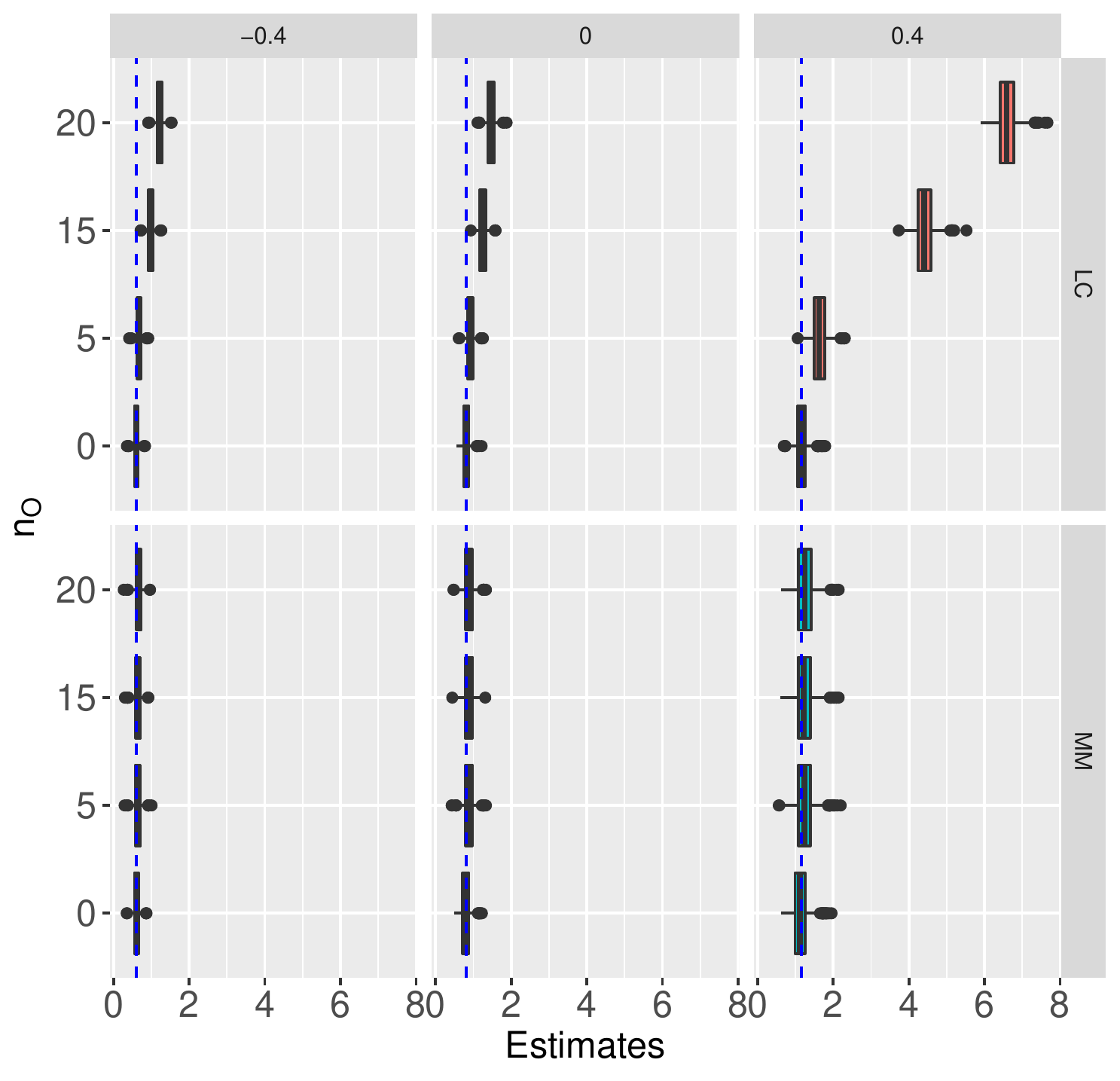}
 &   \includegraphics[width= 0.45\linewidth]{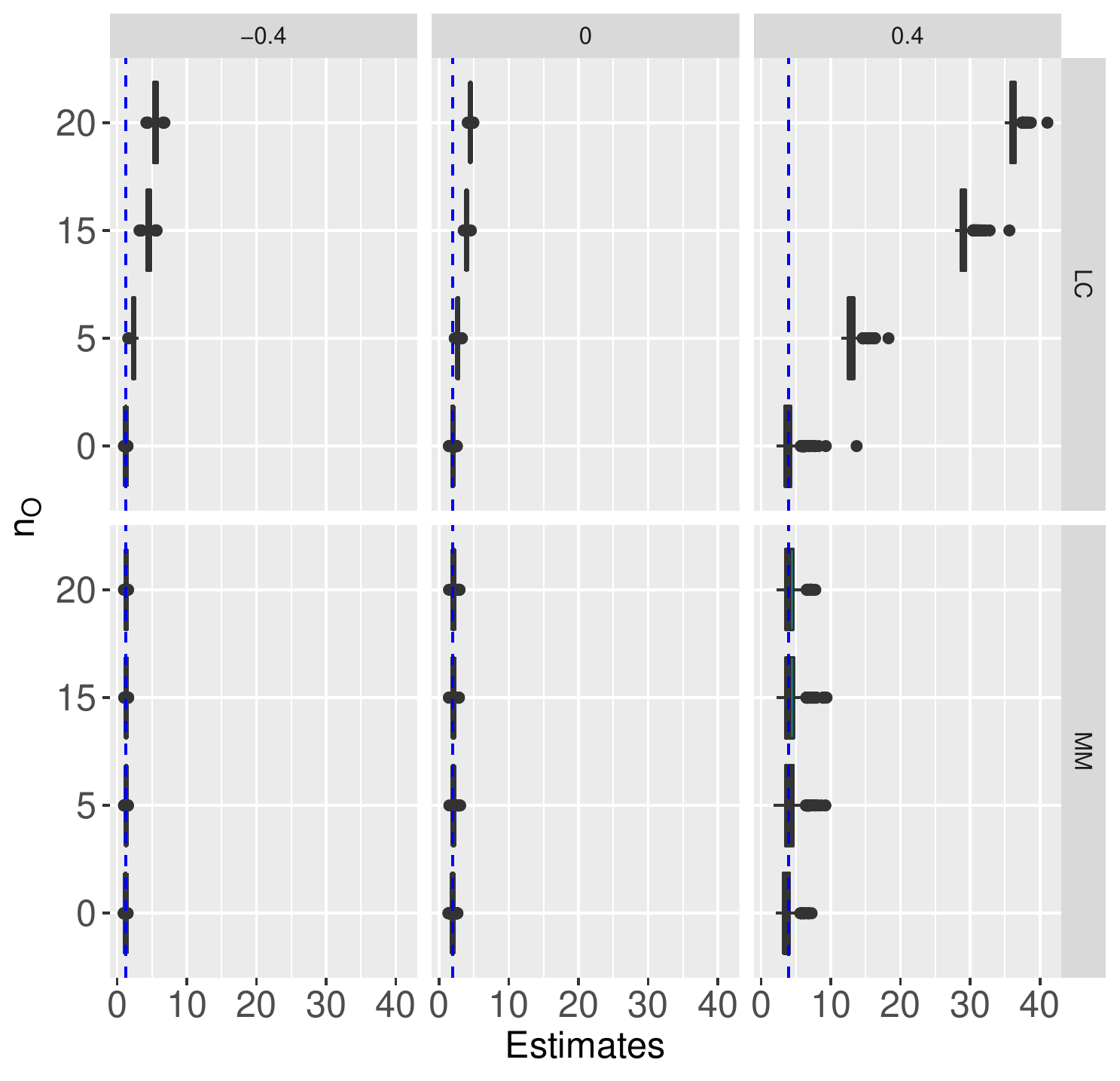} \\
   a) & b) 
    \end{tabular}\\~\\
    \begin{tabular}{cc}
    \includegraphics[width= 0.45 \linewidth]{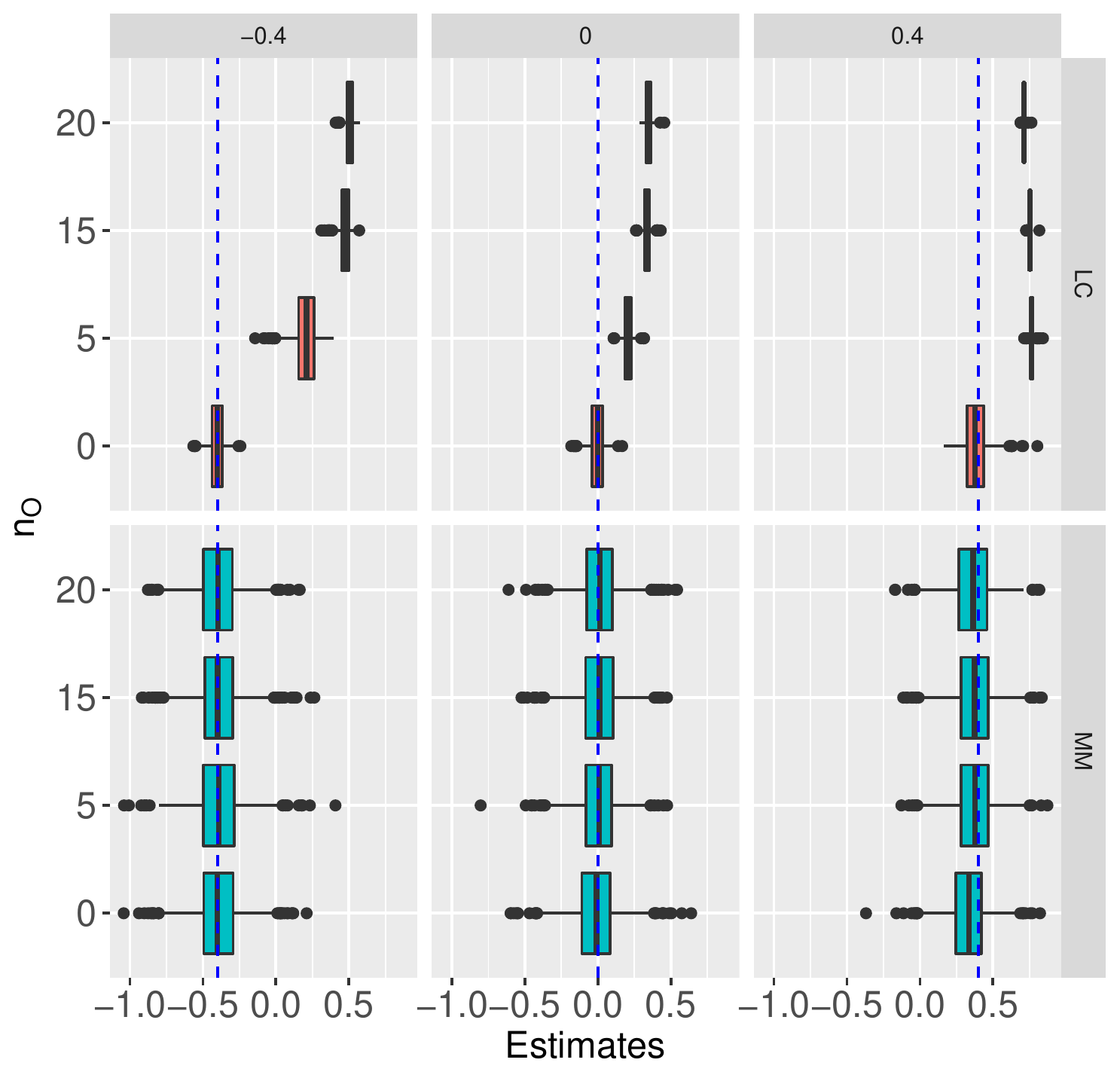} &   \includegraphics[width= 0.45\linewidth]{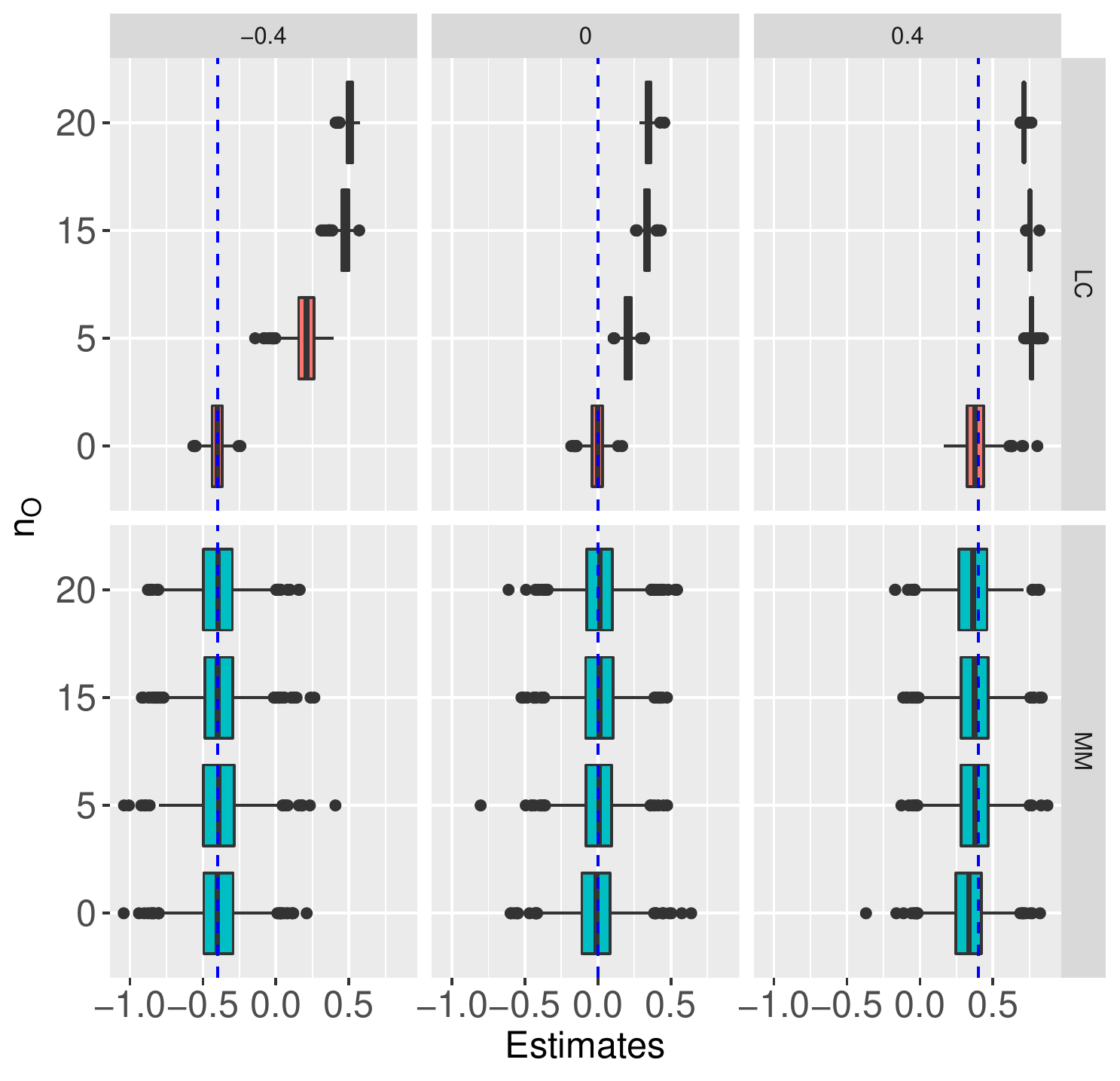} \\
   c) & d) 
   \end{tabular}
    \end{tabular}
    \caption{Estimates of a) $\theta_{3:4}$, b)  $\theta_{4:4}$,  c) $\xi$ and d) $q_{\xi}(0.95)$ obtained with the median-of-mean estimator (MM, in blue, see Equation \eqref{eq:est:xi}), and the estimator computed as a linear combination (LC, in red, see Equation \eqref{eq:comb:linear}), when the shape parameter $\xi$ is equal to -0.4, 0 and 0.4, in columns, and the number of outliers $n_{\mathcal O}$ is equal to 0, 5, 15 and 20, in rows. In each graph, the blue dotted line corresponds to the true value of the quantity to be estimated.}
    \label{fig:boxplot}
\end{figure}

 Let us highlight that when the kernel $\Psi=\Psi_k$, the procedure to build $\hat{\theta}_{k:m}$ 
does not require to solve an optimisation  problem (in contrast to maximum likelihood techniques) and is computationally straightforward to implement. In this special case, computing the $U$-statistics $\hat{\theta}^{(j)}$ within bloc $B_j$ does not require going through all the $m$ subsets of $B_j$. Instead, we may notice that
\begin{equation}\label{eq:comb:linear}
\frac{1}{{n \choose m}}\sum_{\substack{A\subset [n]\\ |A|=m}} \Psi_k\left(X_A\right)=\frac{1}{{n \choose m}}\sum_{i=1}^n { n-i\choose m-k}{i-1 \choose k-1}X_{(i:n)} := T_{k:m}\, .
\end{equation}

For comparison, we also consider the estimator $T_{k:m}$ of $\theta_{k:m}$ which corresponds to the classical $U$-statistics estimate over the whole sample (taking $K=1$ block). As a weighted mean of the order statistics, it is very sensitive to the presence of outliers.

In both cases, we estimate $\theta_{k:m}$, for $m=1,\ldots,4$ and $k \leq m$, the parameter $\xi$ and the 0.95-quantile, denoted $q_{\xi}(0.95)$, with the linear combination estimator $T_{k:m}$ and the median-of-means estimator $\hat \theta_{k:m}$. Figure~\ref{fig:boxplot} displays the boxplots of both estimators as the number of outliers $n_{\mathcal O}$ varies for $\theta_{3:4}$, $\theta_{4:4}$, $\xi$ and $q_{\xi}(0.95)$ (the other graphs can be found along with the code online \texttt{git@github.com:maudmhthomas/PWM.git}). It can be seen that the classical estimator is highly sensitive to outliers. Our median-of-means estimator inherits the robustness properties of the median.

\section{Proofs}\label{sec:proofs}

To prove Proposition~\ref{prop:medians-degenerate}, we first establish the following lemma, which gives non asymptotic bounds on the variance of $U$-statistics.

\begin{lemma}\label{lemma:variance-block}
Let $\Psi:\mathcal{X}^m\to\mathbb{R}$ be a symmetric $q$-degenerate kernel, with $q\in [m]$, such that $v_m<\infty$. For $n\geq m$, let $X_1,\dots,X_n$ be an i.i.d. sample, and define
\[
U_n=\frac{1}{{n \choose m}}\sum_{\substack{A\subset [n]\\ |A|=m}} \Psi\left(X_A\right)\, .
\]
Then, 
\begin{equation}\label{eq:lemma-variance-1}
\mathrm{var}( U_n)\leq \frac{{m-1\choose q-1}m^qv_m}{n^q}\, ,
\end{equation}
and
\begin{equation}\label{eq:lemma-variance-2}
\mathrm{var}( U_n)\leq \frac{{m\choose q}m^q v_q}{n^q} +\frac{{m-1\choose q}m^{q+1}v_m}{n^{q+1}}\, .
\end{equation}
\end{lemma}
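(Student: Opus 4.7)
My plan is to base the argument on Hoeffding's canonical ANOVA decomposition of $\Psi$ into mutually orthogonal components $\psi_j$, totally degenerate of order $j$, with canonical variances $\sigma_j^2 = \mathrm{var}(\psi_j(X_1,\dots,X_j))$. Under $q$-degeneracy, $\sigma_1 = \dots = \sigma_{q-1} = 0$, and this decomposition yields the two identities
\[
\mathrm{var}(U_n) = \sum_{k=q}^{m}\binom{m}{k}^{2}\binom{n}{k}^{-1}\sigma_k^2 \qquad \text{and} \qquad v_k = \sum_{j=q}^{k}\binom{k}{j}\sigma_j^2,
\]
so in particular $v_q = \sigma_q^2$ and $v_m = \sum_{k=q}^{m}\binom{m}{k}\sigma_k^2$. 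The proof then reduces to simple estimates on the weighting factors $\binom{m}{k}^{2}/\binom{n}{k}$.

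The two elementary tools I would use are: (i) $\binom{m}{k}/\binom{n}{k}$ is non-increasing in $k$ whenever $n\geq m$, since successive ratios equal $(m-k)/(n-k)\leq 1$, and (ii) $\binom{m}{k}/\binom{n}{k}\leq (m/n)^k$, which follows from the factorwise comparison $(m-j)/m \leq (n-j)/n$ for $0\leq j\leq k-1$, $n\geq m$. For~\eqref{eq:lemma-variance-1}, I would pull out the largest value of $\binom{m}{k}/\binom{n}{k}$, attained at $k=q$ by (i), to reduce the variance identity to $\mathrm{var}(U_n)\leq \binom{m}{q}v_m/\binom{n}{q}$, then convert to the stated $m^q/n^q$ form via the algebraic inequality $\binom{m}{q}/\binom{n}{q}\leq \binom{m-1}{q-1}m^q/n^q$.

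For~\eqref{eq:lemma-variance-2}, I would separate the $k=q$ contribution, equal to $\binom{m}{q}^2 v_q/\binom{n}{q}$ and hence at most $\binom{m}{q}m^q v_q/n^q$ by (ii), from the remainder. Applying (i) once more at $k=q+1$ to that remainder gives
\[
\sum_{k=q+1}^{m}\binom{m}{k}^{2}\binom{n}{k}^{-1}\sigma_k^2 \leq \frac{\binom{m}{q+1}}{\binom{n}{q+1}}\sum_{k=q+1}^{m}\binom{m}{k}\sigma_k^2 \leq \frac{\binom{m}{q+1}v_m}{\binom{n}{q+1}},
\]
and combining (ii) with $\binom{m}{q+1}\leq m\binom{m-1}{q}$ produces the second term $\binom{m-1}{q}m^{q+1}v_m/n^{q+1}$ (trivially zero in the boundary case $m=q$, since the sum is then empty).

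The main obstacle is the algebraic step in~\eqref{eq:lemma-variance-1}: the inequality $\binom{m}{q}/\binom{n}{q}\leq \binom{m-1}{q-1}m^q/n^q$ is not immediately clean because $\binom{n}{q}$ does not factor neatly as $n^q$. After cancellation it reduces to $(q-1)!\,n^q\leq m^{q-1}\,n(n-1)\cdots(n-q+1)$, which I would establish by noting that $n^q/[n(n-1)\cdots(n-q+1)] = \prod_{j=1}^{q-1}n/(n-j)$ is non-increasing in $n$, hence at most its value at $n=q$, namely $q^{q-1}/(q-1)!$, and then concluding via $q\leq m$. Everything else is routine binomial manipulation once this step is in hand.
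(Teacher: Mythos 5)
Your argument is correct, and it takes a genuinely different route from the paper's. The paper works entirely with the conditional variances $v_k$ through the classical identity $\mathrm{var}(U_n)=\binom{n}{m}^{-1}\sum_k\binom{m}{k}\binom{n-m}{m-k}v_k$, invokes Hoeffding's monotonicity $v_k/k\leq v_\ell/\ell$ to replace every $v_k$ by $kv_m/m$, and then controls the resulting partial Vandermonde sums by a counting argument (choosing $q-1$, resp.\ $q$, indices first). You instead pass to the canonical ANOVA decomposition with orthogonal components $\sigma_k^2$, so that $\mathrm{var}(U_n)=\sum_{k\geq q}\binom{m}{k}^2\binom{n}{k}^{-1}\sigma_k^2$ and $v_m=\sum_{k\geq q}\binom{m}{k}\sigma_k^2$, and the whole proof reduces to the monotonicity and the bound $\binom{m}{k}/\binom{n}{k}\leq (m/n)^k$ of the weights. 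The two representations are of course equivalent (via $v_k=\sum_j\binom{k}{j}\sigma_j^2$), but your manipulations are distinct and your intermediate bounds $\binom{m}{q}v_m/\binom{n}{q}$ and $\binom{m}{q}^2v_q/\binom{n}{q}+\binom{m}{q+1}v_m/\binom{n}{q+1}$ are in fact slightly sharper than the stated ones; the only extra work is the algebraic step $\binom{m}{q}/\binom{n}{q}\leq\binom{m-1}{q-1}m^q/n^q$, which you handle correctly by reducing to $\prod_{j=1}^{q-1}n/(n-j)\leq q^{q-1}/(q-1)!\leq m^{q-1}/(q-1)!$. Two cosmetic remarks: in the remainder term of the second bound, the inequality $\binom{m}{q+1}/\binom{n}{q+1}\leq (m/n)^{q+1}\leq\binom{m-1}{q}(m/n)^{q+1}$ already suffices (using only $\binom{m-1}{q}\geq 1$ when $q\leq m-1$), so the auxiliary inequality $\binom{m}{q+1}\leq m\binom{m-1}{q}$ is not actually needed; and note that the paper's route via the $v_k$ generalizes more directly to the exchangeable, negatively associated setting of the appendix, where the exact variance identity degrades to an inequality in the $v_k$ but the orthogonal decomposition is no longer available.
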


\begin{proof}[of Lemma~\ref{lemma:variance-block}]
Recall the identity
\begin{equation}\label{eq:bound-variance-u-stat}
 \mathrm{var}\left(U_n\right)= \frac{1}{{n\choose m}}\sum_{k=1}^m {m\choose k}{n-m\choose m-k} v_k\, .
\end{equation}
Now if $\Psi$ is $q$-degenerate, then $v_1=\dots=v_{q-1}=0$, and the sum above may be started at $k=q$.  \citet[Theorem 5.1]{hoeffding1948} showed that, for all $1\leq k\leq\ell \leq m$,
\begin{equation}\label{eq:ineq-variance-hoeffding}
\frac{v_k}{k}\leq \frac{v_\ell}{\ell}\, .
\end{equation}
Hence, noting that $v_k\leq kv_m/m$,
\begin{align*}
     \mathrm{var}\left(U_n\right)&\leq \frac{1}{{n\choose m}}\sum_{k=q}^m {m\choose k}{n-m\choose m-k} \frac{k}{m}v_m\\
     &=\frac{v_m}{{n\choose m}}\sum_{k=q-1}^{m-1}{m-1\choose k}{n-m\choose m-1-k}\\
     &\leq \frac{v_m{m-1\choose q-1}{n-q\choose m-q}}{{n\choose m}}\, ,
\end{align*}
where the last inequality comes from the fact that the number of ways to choose $m-1$ elements in a set of size $n-1$ with at least $q-1$ elements taken from a given subset of size $m-1$ is less that the number of ways to first choose $q-1$ elements in that subset, and then $m-q$ elements in what remains. Then, since
\[
\frac{{n-q\choose m-q}}{{n\choose m}}\leq \left(\frac{m}{n}\right)^q \, , 
\]
we obtain
\[
\mathrm{var}\left(U_n\right)\leq \frac{{m-1\choose q-1}m^qv_m}{n^q}\, ,
\]
establishing~\eqref{eq:lemma-variance-1}. The second bound~\eqref{eq:lemma-variance-2} is obtained by singling out the term corresponding to $k=q$ in~\eqref{eq:bound-variance-u-stat} and using $v_k\leq kv_m/m$ only for $k\geq q+1$. This yields
\[
    \mathrm{var}\left(U_n\right)\leq \frac{{m\choose q}{{n-m}\choose {m-q}}}{{n\choose m}}v_q +\frac{1}{{n\choose m}}\sum_{k=q+1}^m {m\choose k}{n-m\choose m-k} \frac{k}{m}v_m\, .
 \]
    Now, on the one hand, 
\[
    \frac{{m\choose q}{n-m\choose m-q}}{{n\choose m}}\leq {m\choose q}\frac{{n-q\choose m-q}}{{n\choose m}} \leq {m\choose q}\left(\frac{m}{n}\right)^q \, \cdot 
\]
    On the other hand,
    \begin{align*}
    \frac{1}{{n\choose m}}\sum_{k=q+1}^m {m\choose k}{n-m\choose m-k} \frac{k}{m} &= \frac{1}{{n\choose m}}\sum_{k=q+1}^m {m-1\choose k-1}{n-m\choose m-k}\\
    &= \frac{1}{{n\choose m}}\sum_{k=q}^{m-1} {m-1\choose k}{n-m\choose m-1-k}\\
    &\leq \frac{{m-1\choose q}{n-q-1\choose m-q-1}}{{n\choose m}}\\
    &\leq  {m-1\choose q}\left(\frac{m}{n}\right)^{q+1} \, \cdot 
    \end{align*}
Finally,
    \[
    \mathrm{var}\left(U_n\right)\leq \frac{{m\choose q}m^q v_q}{n^q} +\frac{{m-1\choose q}m^{q+1}v_m}{n^{q+1}}\, 
    \]
    establishing~\eqref{eq:lemma-variance-2}.
\end{proof}

Before proving Proposition~\ref{prop:medians-degenerate}, we first state a well-known but useful fact. We include the proof for completeness. 

\begin{lemma}\label{lemma:laplace}
    Let $(Y_1,\dots,Y_K)$ be independent Bernoulli random variables such that for all $j\in [K]$, we have $E(Y_j)\leq p$, for some $p\in (0,1)$. Then, for all $a\in (0,1)$, 
    \[
    \mathrm{pr}\left(\sum_{j=1}^K Y_j\geq aK\right)\leq \left(\frac{p}{a}\right)^{aK}\left(\frac{1-p}{1-a}\right)^{(1-a)K}\, .
    \]
    In particular, for $a=1/2$, 
     \[
    \mathrm{pr}\left(\sum_{j=1}^K Y_j\geq K/2\right)\leq \left(2\sqrt{p(1-p)}\right)^{K}\leq \left(2\sqrt{p}\right)^{K}\, .
    \]
\end{lemma}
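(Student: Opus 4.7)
The plan is to apply the standard exponential Markov (Chernoff) argument. For any $\lambda>0$, I would bound
\[
\mathrm{pr}\Bigl(\sum_{j=1}^K Y_j\geq aK\Bigr)
=\mathrm{pr}\Bigl(\mathrm{e}^{\lambda\sum_j Y_j}\geq \mathrm{e}^{\lambda aK}\Bigr)
\leq \mathrm{e}^{-\lambda aK}\prod_{j=1}^K E\bigl[\mathrm{e}^{\lambda Y_j}\bigr].
\]
Since $Y_j$ is Bernoulli with parameter $p_j\leq p$, we have $E[\mathrm{e}^{\lambda Y_j}]=1+p_j(\mathrm{e}^{\lambda}-1)$, which is increasing in $p_j$ for $\lambda>0$, and hence bounded by $1-p+p\mathrm{e}^\lambda$. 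This reduces the problem to the one-parameter bound $\mathrm{e}^{-\lambda aK}(1-p+p\mathrm{e}^\lambda)^K$.

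Next I would optimise over $\lambda$. Differentiating $K\log(1-p+p\mathrm{e}^{\lambda})-\lambda aK$ and setting the derivative to zero yields $\mathrm{e}^{\lambda}=\tfrac{a(1-p)}{p(1-a)}$, which is positive precisely when $a>p$ (the case $a\leq p$ is trivial since the bound to prove is then at least one). Substituting back gives $1-p+p\mathrm{e}^\lambda=(1-p)/(1-a)$, and a short algebraic simplification delivers
\[
\Bigl(\frac{1-p}{1-a}\Bigr)^K\Bigl(\frac{p(1-a)}{a(1-p)}\Bigr)^{aK}
=\Bigl(\frac{p}{a}\Bigr)^{aK}\Bigl(\frac{1-p}{1-a}\Bigr)^{(1-a)K},
\]
which is the stated inequality.

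For the specialisation to $a=1/2$, plugging in gives $(2p)^{K/2}(2(1-p))^{K/2}=(4p(1-p))^{K/2}=(2\sqrt{p(1-p)})^K$, and $(1-p)\leq 1$ yields the final $(2\sqrt{p})^K$ upper bound. There is no genuine obstacle here; the only point worth a line of justification is monotonicity of $1+p_j(\mathrm{e}^\lambda-1)$ in $p_j$, which lets us replace the unknown $p_j$'s by the uniform upper bound $p$ before optimising.
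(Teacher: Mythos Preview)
Your proof is correct and follows essentially the same Chernoff-bound approach as the paper: bound the moment generating function using $E(Y_j)\leq p$, then optimise over $\lambda$ to obtain the stated inequality. You even add the helpful remark that the case $a\leq p$ is vacuous because the right-hand side is then at least one, which the paper leaves implicit.
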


\begin{proof}[of Lemma~\ref{lemma:laplace}]
Let $\lambda\geq 0$, then 
\[
\log E\left( \mathrm{e}^{\lambda \sum_{j=1}^K Y_j}\right)=\sum_{j=1}^K \log E\left( \mathrm{e}^{\lambda Y_j}\right) =\sum_{j=1}^K \log\left( E(Y_j)\mathrm{e}^\lambda +1-E(Y_j)\right)\, .
\]
Since $E(Y_j)\leq p$, 
\[
\log E\left( \mathrm{e}^{\lambda \sum_{j=1}^K Y_j}\right)\leq K\log\left( p\mathrm{e}^\lambda +1-p\right)\, .
\]
Next, using a Chernoff bound, 
\[
    \mathrm{pr}\left( \sum_{j=1}^K Y_j\geq a K\right)\leq \exp \left\{-K\sup_{\lambda\geq 0}\left(\lambda a-\log(p\mathrm{e}^\lambda +1-p)\right)\right\}\, .
\]
Observing that the supremum is attained for $\lambda=\log\left(a(1-p)/(p(1-a))\right)$, 
\[
 \mathrm{pr}\left( \sum_{j=1}^K Y_j\geq a K\right)\leq \left(\frac{p}{a}\right)^{aK}\left(\frac{1-p}{1-a}\right)^{(1-a)K}\, .
\]
\end{proof}

\begin{proof}[of Proposition~\ref{prop:medians-degenerate}]
Let $t=\min\{t_1,t_2\}$, with 
\begin{equation}\label{eq:t1}
t_1=2e\sqrt{\frac{{m-1\choose q-1}(2m)^qv_m \left\lceil\log(1/\delta)\right\rceil^q}{n^q}}\, ,
\end{equation}
and
\begin{equation}\label{eq:t2}
t_2=2e\sqrt{\frac{{m\choose q}(2m)^q v_q \left\lfloor\log(1/\delta)\right\rfloor^q}{n^q}+\frac{{m-1\choose q}(2m)^{q+1}v_m \left\lceil\log(1/\delta)\right\rceil^{q+1}}{n^{q+1}}}\, .
\end{equation}
By definition of the median, both the number of $j$ such that $\hat{\theta}^{(j)}\geq \hat{\theta}_{n}$ and the number of $j$ such that $\hat{\theta}^{(j)}\leq \hat{\theta}_{n}$ are at least $K/2$. This leads to write
\begin{align*}
\mathrm{pr}\left(  |\hat{\theta}_{n} -\theta| >t\right) &\leq \mathrm{pr}\left( \sum_{j=1}^K Y_j \geq K/2\right)\, ,
\end{align*}
where $Y_j=\mathbf{1}_{\{  |\hat{\theta}^{(j)} -\theta| >t \}}$. We now look for an upper bound on $E(Y_j)$ so as to apply Lemma~\ref{lemma:laplace}. By Chebyshev Inequality,  for all $j\in\{1,\dots,m\}$,
\[
E(Y_j)=\mathrm{pr}\left(  |\hat{\theta}^{(j)} -\theta |>t\right)\leq  \frac{\mathrm{var}\left(\hat{\theta}^{(j)}\right)}{t^2}\, \cdot 
\]
When $t=t_1$, one may use the bound~\eqref{eq:lemma-variance-1} in Lemma~\ref{lemma:variance-block} with $|B_j|$ instead of $n$. Since $|B_j|\geq \left\lfloor n/K \right\rfloor\geq n/(2K)$, 
\[
E(Y_j)\leq \frac{{m-1\choose q-1}m^qv_{m}}{|B_j|^q t_1^2}\leq \frac{{m-1\choose q-1}(2mK)^qv_{m}}{n^q t_1^2}= \frac{1}{4\mathrm{e}^2}\, .
\]
When $t=t_2$, the bound~\eqref{eq:lemma-variance-2} in Lemma~\ref{lemma:variance-block} entails
\[
E(Y_j)\leq \frac{\frac{{m\choose q}m^qv_q}{|B_j|^q}+\frac{{m-1\choose q}m^{q+1}v_m}{|B_j|^{q+1}}}{t_2^2}\leq \frac{\frac{{m\choose q}(2mK)^qv_q}{n^q}+\frac{{m-1\choose q}(2mK)^{q+1}v_m}{n^{q+1}}}{t_2^2}=\frac{1}{4\mathrm{e}^2}\, ,
\]
where as above, we used that $|B_j|\geq \left\lfloor n/K \right\rfloor\geq n/(2K)$. In both cases, Lemma~\ref{lemma:laplace} can be applied with $a=1/2$ and $p=\mathrm{e}^{-2}/4$ to obtain
\[
\mathrm{pr}\left( \sum_{j=1}^K Y_j \geq K/2\right)\leq \mathrm{e}^{-K}\leq \delta\, .
\]
\end{proof}

Concerning the proof of Equation \eqref{eq:medians-independent} in Remark~\ref{prop:medians-independent}. Let $\delta\in \left[\mathrm{e}^{-\lfloor n/m\rfloor}/(1-\mathrm{e}^{-1}),1\right[$ and let $k_\delta$ be the smallest integer $k\in \{1,\dots, \lfloor n/m\rfloor\}$ such that $\delta\geq \mathrm{e}^{-k}/(1-\mathrm{e}^{-1})$. By a union bound, 
\begin{align*}
\mathrm{pr}\left( \bigcap_{j=k_\delta}^{\lfloor n/m\rfloor} \left\{ \theta \in \hat{I}_j\right\}\right) & \geq 1-\sum_{j=k_\delta}^{\lfloor n/m\rfloor} \mathrm{e}^{-j} \geq 1- \frac{\mathrm{e}^{-k_\delta}}{1-\mathrm{e}^{-1}} \geq 1-\delta\, .
\end{align*}
Now, on the event $\bigcap_{j=k_\delta}^{\lfloor n/m\rfloor} \left\{ \theta\in \hat{I}_j\right\}$, one has $\bigcap_{j=k_\delta}^{\lfloor n/m\rfloor} \hat{I}_j\neq \emptyset$, hence $\hat{k}\leq k_\delta$. But if $\hat{k}\leq k_\delta$, then $\tilde{\theta}_n$ also belongs to $\bigcap_{j=k_\delta}^{\lfloor n/m\rfloor}\hat{I}_j$ and 
\[
| \tilde{\theta}_n -\theta |\leq 4e\sqrt{\frac{2mv^\star_{m}k_\delta}{n}}\leq 4e\sqrt{\frac{2mv^\star_{m}\left(1+\log\left(\frac{1}{1-\mathrm{e}^{-1}}\right)+\log\left(\frac{1}{\delta}\right)\right)}{n}}\, ,
\]
which proofs Equation \eqref{eq:medians-independent}.

\begin{proof}[of Proposition~\ref{prop:medians-outliers}]
Let $t=8\mathrm{e}\min\{t_1,t_2\}/(3\sqrt{3})$, with $t_1$ and $t_2$ as defined in~\eqref{eq:t1} and~\eqref{eq:t2}. Then,
\[
\mathrm{pr}_{\mathcal{O}\cup\mathcal{I}}\left( |\hat{\theta}_n-\theta |> t\right)\leq \mathrm{pr}_{\mathcal{O}\cup\mathcal{I}}\left(\sum_{j=1}^K Y_j \geq K/2\right)\, ,
\]
with $Y_j=\mathbf{1}_{\left\{ \left|\hat{\theta}^{(j)}-\theta\right|> t\right\}}$. Letting $\mathcal{B}$ be the set of blocks that do not intersect $\mathcal{O}$, 
\[
\mathrm{pr}_{\mathcal{O}\cup\mathcal{I}}\left(\sum_{j=1}^K Y_j \geq K/2\right)\leq \mathrm{pr}_{\mathcal{O}\cup\mathcal{I}}\left(|\mathcal{B}^c|+\sum_{j\in\mathcal{B}} Y_j \geq K/2\right)\, .
\]
Since $|\mathcal{B}^c|\leq |\mathcal{O}|\leq K/4$ by assumption, we get
\[
\mathrm{pr}_{\mathcal{O}\cup\mathcal{I}}\left( |\hat{\theta}_n-\theta |> t\right)\leq  \mathrm{pr}\left(\sum_{j=1}^K Y_j\geq K/4\right)\, ,
\] 
 where $\mathrm{pr}$, here, corresponds to the probability measure for an i.i.d. sample. By Chebyshev Inequality, we have
\[
E(Y_j)\leq \frac{\mathrm{var}\left(\hat{\theta}^{(j)}\right)}{t^2}\, \cdot 
\]
In the proof of Proposition~\ref{prop:medians-degenerate}, we have shown that
\[
\frac{\mathrm{var}\left(\hat{\theta}^{(j)}\right)}{\min\{t_1,t_2\}^2}\leq \frac{1}{4\mathrm{e}^2}\, ,
\]
leading to $E(Y_j)\leq 3^3 2^{-8}\mathrm{e}^{-4}$.
We may thus apply Lemma~\ref{lemma:laplace} with $a=1/4$ and $p=3^3 2^{-8}\mathrm{e}^{-4}$ to obtain
\[
\mathrm{pr}\left(\sum_{j=1}^K Y_j \geq K/4\right) \leq \mathrm{e}^{-K}\leq \delta\, .
\]
\end{proof}

\begin{proof}[of Proposition~\ref{prop:ineq-xi}]
 Let 
 \[
c_n(\delta)=\frak c \sqrt{\frac{8\max(v_{1},v_{2},v_{4})\left\lceil\log(1/\delta)\right\rceil}{n}}
 \]
 with $\frak c$ denoting either $2\mathrm e$ or $16\mathrm e^2/(3 \sqrt 3)$ depending on whether the sample is contaminated or not. For $j=1,2,4$, consider the event 
 \[
E_j = \left\{ | \hat{\theta}_{j} -\theta_{j} | \leq c_n(\delta) \right\}\, .
\]
On the one hand, from Propositions~\eqref{prop:medians-degenerate} and~\ref{prop:medians-outliers}, $\mathrm{pr}(E_1\cap E_2\cap E_4)\geq 1-3\delta$ and $\mathrm{pr}_{\mathcal{O}\cup\mathcal{I}}(E_1\cap E_2\cap E_4)\geq 1-3\delta$ depending on the value of $\frak c$. 

On the other hand,
\begin{align*}
    2^{\hat{\xi}}-2^\xi &= \frac{\hat\theta_{4} - \hat\theta_{2} }{\hat\theta_{2} - \hat\theta_{1} }- \frac{\theta_{4} - \theta_{2} }{\theta_{2} - \theta_{1} }\\
    &= \frac{(\hat\theta_{4} - \hat\theta_{2})(\theta_{2} - \theta_{1})-(\theta_{4} - \theta_{2})(\hat\theta_{2} - \hat\theta_{1})}{(\hat\theta_{2} - \hat\theta_{1})(\theta_{2} - \theta_{1})}\\
    &= \frac{(\hat\theta_{4} - \theta_{4})(\theta_{2} - \theta_{1})-(\hat\theta_{2}-\theta_{2})(\theta_{4}-\theta_{1})+(\hat\theta_{1}-\theta_{1})(\theta_{4}-\theta_{2})}{(\hat\theta_{2} - \hat\theta_{1})(\theta_{2} - \theta_{1})}\, .
\end{align*}
On the event $E_1\cap E_2\cap E_4$,
\begin{align*}
   | 2^{\hat{\xi}}-2^\xi |&\leq \frac{2c_n(\delta)(\theta_{4} - \theta_{1})}{(\hat\theta_{2} - \hat\theta_{1})(\theta_{2} - \theta_{1})}=\frac{2c_n(\delta)(2^\xi +1)}{\hat\theta_{2} - \hat\theta_{1}}\, .
\end{align*}
Using that for $a,b>0$, 
\[
|\log(a)-\log(b)|\leq \frac{1}{2}|a-b|\left(\frac{1}{a}+\frac{1}{b}\right) \, , 
\]
we obtain that, with probability larger than $1-3\delta$,
\begin{align*}
    | \hat\xi-\xi|&\leq \frac{c_n(\delta)}{\log(2)(\hat\theta_{2} - \hat\theta_{1})}(2^\xi+1)(2^{-\hat{\xi}}+2^{-\xi}) \, . 
    \end{align*}
\end{proof}

\appendix

\section{Concentration bounds under exchangeability and negative association}
\label{sec:CNA}

\begin{definition}\label{def:NA}
A sequence of real-valued random variables $(X_1,\dots,X_n)$ is said to be negatively associated  if for all subset $A\subset [n]$, and for all (coordinate-wise) non-decreasing functions $f:\mathbb{R}^{|A|}\to \mathbb{R}$ and $g:\mathbb{R}^{n-|A|}\to \mathbb{R}$ such that the expectations below are well-defined, one has
\[
E\left[f\left(X_A\right) g\left(X_{[n]\setminus A}\right)\right]\leq E\left[f\left(X_A\right) \right]E\left[ g\left(X_{[n]\setminus A}\right)\right]\, .
\]
\end{definition}

\begin{definition}\label{def:CNA}
A sequence of real-valued random variables $(X_1,\dots,X_n)$ is said to be conditionally negatively associated (CNA) if for all $S\subset [n]$ and $A\subset S$, and for all non-decreasing functions $f:\mathbb{R}^{|A|}\to \mathbb{R}$ and $g:\mathbb{R}^{|S\setminus A|}\to \mathbb{R}$ such that the expectations below are well-defined, one has
\[
E\left[f\left( X_A\right) g\left( X_{S\setminus A}\right) \,\big|\, X_S\right]\leq E\left[f\left( X_A\right)\,\big|\, X_S \right]E\left[ g\left( X_{S\setminus A}\right)\,\big|\, X_S\right]\, .
\]
In other words, the sequence is CNA if it is negatively associated and all conditionalizations are negatively associated.
\end{definition}

A immediate consequence is that sums of negatively associated random variables concentrate at least as well as sums of independent random variables with the same marginals. More precisely, the Laplace transform can be bounded by the product of marginal transforms: if $(X_1n\dots,XX_n)$ is NA, then for all $\lambda\in\mathbb{R}$,
\[
E\left[\mathrm{e}^{\lambda\sum_{i=1}^n X_i}\right]\leq \prod_{i=1}^nE\left[\mathrm{e}^{\lambda X_i}\right]\, .
\]
Let us also mention an important proposition of NA.

\begin{proposition}\label{proposition:NA}
If $(X_1,\dots,X_n)$ is NA, if $A_1,\dots,A_k$ are disjoint subsets of $[n]$, and if $h_1,\dots,h_k$ are non-decreasing functions defined on $\mathbb{R}^{|A_1|},\dots, \mathbb{R}^{|A_k|}$ respectively, then the sequence $\left(h_1(X_{A_1}),\dots, h_k(X_{A_k})\right)$ is NA.
\end{proposition}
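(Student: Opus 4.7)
The plan is to verify the defining inequality directly, reducing the negative association of the transformed sequence $(h_1(X_{A_1}),\dots,h_k(X_{A_k}))$ to a single application of the NA property of the underlying sequence $(X_1,\dots,X_n)$. To that end, fix an arbitrary subset $B\subset [k]$ and two coordinate-wise non-decreasing functions $f:\mathbb{R}^{|B|}\to\mathbb{R}$ and $g:\mathbb{R}^{k-|B|}\to\mathbb{R}$ for which the relevant expectations make sense. The goal is to establish
\[
E\!\left[f\!\left((h_i(X_{A_i}))_{i\in B}\right)\,g\!\left((h_j(X_{A_j}))_{j\notin B}\right)\right]\leq E\!\left[f\!\left((h_i(X_{A_i}))_{i\in B}\right)\right]E\!\left[g\!\left((h_j(X_{A_j}))_{j\notin B}\right)\right].
\]

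Next, I would exploit the disjointness of the $A_i$'s to bundle the indices. Set $A=\bigcup_{i\in B}A_i\subset[n]$, so that $\bigcup_{j\notin B}A_j\subset [n]\setminus A$. Define $F:\mathbb{R}^{|A|}\to\mathbb{R}$ by $F(x_A)=f\!\left((h_i(x_{A_i}))_{i\in B}\right)$ and $G:\mathbb{R}^{n-|A|}\to\mathbb{R}$ by $G(x_{[n]\setminus A})=g\!\left((h_j(x_{A_j}))_{j\notin B}\right)$, extending $G$ so that it ignores any coordinate in $[n]\setminus A$ that is not in $\bigcup_{j\notin B}A_j$. Because each $h_i$ is coordinate-wise non-decreasing and $f,g$ are coordinate-wise non-decreasing, the compositions $F$ and $G$ are non-decreasing in each of their arguments (constancy in the ignored coordinates trivially preserves monotonicity).

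Once $F$ and $G$ are in hand, the NA property of $(X_1,\dots,X_n)$, applied to the subset $A$ and the non-decreasing functions $F$ and $G$, yields
\[
E\!\left[F(X_A)\,G(X_{[n]\setminus A})\right]\leq E\!\left[F(X_A)\right]E\!\left[G(X_{[n]\setminus A})\right],
\]
which is precisely the inequality required for $(h_1(X_{A_1}),\dots,h_k(X_{A_k}))$ to be NA with respect to the partition determined by $B$. Since $B$ was arbitrary, the proof is complete.

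There is no real obstacle here; the only subtle point is making sure that the lifted functions $F$ and $G$ really are defined on the full coordinate spaces $\mathbb{R}^{|A|}$ and $\mathbb{R}^{n-|A|}$ (not just on the sub-coordinates indexed by the $A_i$'s) and that extending them to be constant in unused coordinates keeps them non-decreasing — which follows immediately from the definition. The argument rests entirely on the disjointness of the $A_i$'s and the preservation of monotonicity under composition.
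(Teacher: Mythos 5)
Your proof is correct. The paper states this proposition without proof (it is a classical closure property of negative association, going back to Joag-Dev and Proschan), and your argument -- bundling the disjoint index sets $A_i$ into $A=\bigcup_{i\in B}A_i$, composing to get non-decreasing functions $F$ of $X_A$ and $G$ of $X_{[n]\setminus A}$ (constant in the unused coordinates), and invoking the NA inequality for the pair $(A,[n]\setminus A)$ once -- is exactly the standard verification. The points you flag as subtle (extension to ignored coordinates, preservation of monotonicity under composition) are handled correctly.
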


The following result generalizes Proposition~\ref{prop:medians-degenerate} to the case where the sample $(X_1,\dots,X_n)$ is exchangeable and CNA. For simplicity, we only prove it in the non-degenerate case ($q=1$). Notice that the symmetric function $\Psi$ needs to be non-decreasing. Under this additional assumptions, the same bounds hold, with probability $1-2\delta$ instead of $1-\delta$.

\begin{proposition}\label{prop:medians-CNA}
Let $(X_1, \dots, X_n)$ be an exchangeable CNA sample, and , for $m\in [n]$, let $\Psi:\mathbb{R}^m\to \mathbb{R}$ be a non-decreasing symmetric function such that $E\left[\Psi(X_1,\dots,X_m)\right]=\theta$ and $\mathrm{var}\left(\Psi(X_1,\dots,X_m)\right)<\infty$. For all $1\leq k\leq m$, let 
\[
v_k = \mathrm{var}\left(E\left[ \Psi(X_1,\ldots, X_{m})\, |\, X_1,\dots,X_k\right]\right)\, .
\]
Then, for all $\delta \in [\mathrm{e}^{\lfloor n/m\rfloor} ,1)$, 
the median-of-means estimator $\hat{\theta}_n$ defined by \eqref{est:medianofmeans} satisfies 
\begin{equation}\label{eq:prop-median-sub-gaussian-CNA}
\mathrm{pr}\left( | \hat{\theta}_n -\theta | >2e\sqrt{\frac{2mv_m \left\lceil \log(1/\delta)\right\rceil}{n}}\right)\leq 2\delta\, ,
\end{equation}
and
\begin{equation}\label{eq:prop-median-sub-gamma-CNA}
\mathrm{pr}\left( | \hat{\theta}_n -\theta | >2e\sqrt{\frac{2m^2v_1 \left\lceil \log(1/\delta)\right\rceil}{n}+\frac{4m^3v_m \left\lceil \log(1/\delta)\right\rceil^2}{n^2}}\right)\leq 2\delta\, .
\end{equation}
\end{proposition}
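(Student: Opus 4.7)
The plan is to mirror the proof of Proposition~\ref{prop:medians-degenerate} while replacing its two uses of independence: the variance bound for a single block's $U$-statistic, and the mutual independence of the $K$ block estimators. We also split the two-sided deviation event into its one-sided halves, because negative association is most naturally applied to indicators that are coordinate-wise monotone in the underlying sample; the extra factor $2$ in the stated probability bound arises from a union bound over these two halves.

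For the per-block variance, note that $(X_i)_{i \in B_j}$ inherits exchangeability and CNA from the full sample. By exchangeability, $c_k = \mathrm{cov}(\Psi(X_A), \Psi(X_{A'}))$ depends only on $k = |A \cap A'|$, so that
\[
\mathrm{var}(\hat{\theta}^{(j)}) = \binom{|B_j|}{m}^{-1} \sum_{k=0}^{m} \binom{m}{k} \binom{|B_j|-m}{m-k}\, c_k\, .
\]
For disjoint pairs ($k=0$), NA applied to the non-decreasing functions $\Psi(X_A)$ and $\Psi(X_{A'})$ gives $c_0 \leq 0$, so that term can be dropped. For $k \geq 1$, conditioning on $X_{A \cap A'}$ and invoking CNA together with the monotonicity of $\Psi$ yields
\[
E\!\left[\Psi(X_A)\,\Psi(X_{A'}) \mid X_{A \cap A'}\right] \leq E\!\left[\Psi(X_A) \mid X_{A \cap A'}\right]\, E\!\left[\Psi(X_{A'}) \mid X_{A \cap A'}\right]\, ,
\]
which upon taking expectations gives $c_k \leq v_k$. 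The remaining computation of Lemma~\ref{lemma:variance-block} then carries over verbatim, provided the Hoeffding ordering $v_k / k \leq v_m / m$ extends from the i.i.d.\ setting to our exchangeable sample.

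For the aggregation step, the monotonicity of $\Psi$ makes each $\hat{\theta}^{(j)}$ a coordinate-wise non-decreasing function of $X_{B_j}$, and the blocks being disjoint, Proposition~\ref{proposition:NA} applied to the NA sequence $(X_1, \ldots, X_n)$ yields that $(\hat{\theta}^{(1)}, \ldots, \hat{\theta}^{(K)})$ is NA. Setting $Y_j^\pm = \mathbf{1}_{\{\pm(\hat{\theta}^{(j)} - \theta) > t\}}$, the deviation event satisfies
\[
\{|\hat{\theta}_n - \theta| > t\} \subseteq \left\{\sum_{j=1}^K Y_j^+ \geq K/2\right\} \cup \left\{\sum_{j=1}^K Y_j^- \geq K/2\right\}\, .
\]
Each of the sequences $(Y_j^+)_j$ and $(Y_j^-)_j$ is coordinate-wise monotone in $(\hat{\theta}^{(j)})_j$ (non-decreasing, respectively non-increasing), and since the NA inequality also holds for non-increasing coordinate functions (by negating both coordinates), both sequences satisfy the Laplace domination $E[\prod_j \exp(\lambda Y_j^\pm)] \leq \prod_j E[\exp(\lambda Y_j^\pm)]$ for all $\lambda \geq 0$. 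This is the only property of independence used in the proof of Lemma~\ref{lemma:laplace}, so its Chernoff bound applies unchanged. Chebyshev combined with the per-block variance estimate gives $E[Y_j^\pm] \leq \mathrm{e}^{-2}/4$ for the stated choice of $t$, whence $\mathrm{pr}(\sum_j Y_j^\pm \geq K/2) \leq \mathrm{e}^{-K} \leq \delta$ on each side; summing yields the $2\delta$ bound.

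The main hurdle is the variance step: the covariance inequality $c_k \leq v_k$ follows essentially at once from CNA and monotonicity, but the Hoeffding ordering $v_k/k \leq v_m/m$ is subtle in the absence of independence, since the orthogonal Hoeffding decomposition of $\Psi - \theta$ is not available. Establishing this ordering (or otherwise reaching $\mathrm{var}(\hat{\theta}^{(j)}) \leq m v_m / |B_j|$) is where the argument most genuinely relies on the global exchangeability of the sample, rather than merely on pairwise interchangeability of its marginals.
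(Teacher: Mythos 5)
Your proof follows essentially the same route as the paper's: split the two-sided deviation into its one-sided halves (whence the factor $2\delta$), use the monotonicity of $\Psi$ together with Proposition~\ref{proposition:NA} to make the block indicators negatively associated so that the Chernoff bound of Lemma~\ref{lemma:laplace} still applies, and bound the per-block variance by conditioning on $X_{A\cap B}$ and invoking CNA plus exchangeability to recover the combinatorial estimate of Lemma~\ref{lemma:variance-block}. The Hoeffding ordering $v_k/k\leq v_m/m$ that you flag as the remaining hurdle is dispatched in the paper only by the observation that Hoeffding's original proof of inequality~\eqref{eq:ineq-variance-hoeffding} requires nothing more than exchangeability, so your proposal is as complete as the paper's own argument.
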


\begin{proof}[Proof of Proposition~\ref{prop:medians-CNA}]
Let $t=\min\{t_1,t_2\}$, with 
\[
t_1=2e\sqrt{\frac{2mv_m \left\lceil \log(1/\delta)\right\rceil}{n}}
\]
and
\[
t_2=2e\sqrt{\frac{2m^2v_1 \left\lceil \log(1/\delta)\right\rceil}{n}+\frac{4m^3v_m \left\lceil \log(1/\delta)\right\rceil^2}{n^2}}\, .
\]
To deal with monotone events, we first have to decompose the absolute value:
\[
\mathrm{pr}\left(  |\hat{\theta}_{n} -\theta| >t\right) =\mathrm{pr}\left(  \hat{\theta}_{n} -\theta >t\right) +\mathrm{pr}\left(  \hat{\theta}_{n} -\theta <-t\right)\, .
\]
Let us show that both terms on the right-hand side above are less than $\delta$. We only detail the argument for $\mathrm{pr}\left(  \hat{\theta}_{n} -\theta >t\right)$. The term $\mathrm{pr}\left(  \hat{\theta}_{n} -\theta <-t\right)$ can be treated similarly. We have
\[
\mathrm{pr}\left(  \hat{\theta}_{n} -\theta >t\right)\leq \mathrm{pr}\left(\sum_{j=1}^K Y_j\geq K/2\right)\, ,
\]
with $Y_j=\mathbf{1}_{\left\{ \hat{\theta}^{(j)}-\theta>t\right\}}$. Since $\Psi$ is non-decreasing, the sequence $(Y_1,\dots,Y_K)$ is NA thanks to Property~\ref{proposition:NA}. The bounds of Lemma~\ref{lemma:laplace} (which come from bounds on the Laplace transform) thus apply here as well, and it remains to verify that $E(Y_j)\leq \frac{1}{4\mathrm{e}^2}$, as in the proof of Proposition~\ref{prop:medians-degenerate}. To that aim, it suffices to show that the variance bounds of Lemma~\ref{lemma:variance-block} also holds in the exchangeable CNA setting, under the assumption that $\Psi$ is non-decreasing. We have
\begin{align*}
    \mathrm{var}\left(U\right) &= \frac{1}{{n\choose m}^2}E\left[ \left(\sum_{\substack{A\subset [n]\\ |A|=m}}\left(\Psi\left(X_A\right)-\theta\right)\right)^2\right]\\
    &=\frac{1}{{n\choose m}^2}\sum_{k=0}^m \sum_{\substack{A,B\subset [n]\\ |A|=|B|=m,\, |A\cap B|=k}}E\left[ \left(\Psi\left(X_A\right)-\theta\right) \left(\Psi\left(X_B\right)-\theta\right)\right]\, .
\end{align*}
By exchangeability and CNA, and since $\Psi$ is non-decreasing and symmetric, we have, for all subsets $A$ and $B$ of size $m$ such that $|A\cap B|=k$, 
\begin{align*}
E\left[ \left(\Psi\left(X_A\right)-\theta\right) \left(\Psi\left(X_B\right)-\theta\right)\right]&= E\left[E\left[ \left(\Psi\left(X_A\right)-\theta\right) \left(\Psi\left(X_B\right)-\theta\right)\, \big|\, X_{A\cap B}\right]\right]\\
&\leq E\left[ \left( E\left[\Psi(X_1,\dots,X_m)\,\big|\, X_1,\dots,X_k\right]-\theta\right)^2\right]\, .
\end{align*}
 Since the number of subsets $A$ and $B$ of size $m$ such that $|A\cap B|=k$ is equal to ${n\choose m}{m\choose k}{n-m\choose m-k}$, we arrive at
\begin{equation*}
 \mathrm{var}\left(U\right)\leq \frac{1}{{n\choose m}}\sum_{k=0}^m {m\choose k}{n-m\choose m-k} v_k\, ,
\end{equation*}
where $v_k=E\left[ \left( E\left[\Psi(X_1,\dots,X_m)\,\big|\, X_1,\dots,X_k\right]-\theta\right)^2\right]$. At this point, all the proof of Lemma~\ref{lemma:variance-block} can be repeated, after checking that the proof of Inequality~\eqref{eq:ineq-variance-hoeffding} in~\citep{hoeffding1948} only requires exchangeability. 
\end{proof}

\bibliographystyle{abbrvnat}
\bibliography{biblio}

\end{document}